 \tikzset{mynode/.style={draw,circle,inner sep=1pt,outer sep=0pt}}
\theoremstyle{plain}
\newtheorem{teo}[subsection]{Theorem}
\newtheorem{cor}[subsection]{Corollary}
\newtheorem{lem}[subsection]{Lemma}
\newtheorem{prop}[subsection]{Proposition}
\theoremstyle{remark}
\newtheorem{defi}[subsection]{Definition}
\newtheorem{example}[subsection]{Example}
\newtheorem{remark}[subsection]{Remark}
\def\mathsfdef#1{\expandafter\def\csname#1\endcsname{{\sf#1}}}
\dedicatory{}
\begin{document}

\title{EFFECTIVE DESCENT MORPHISMS OF FILTERED PREORDERS}

\author{Maria Manuel Clementino}
\address[Maria Manuel Clementino]{CMUC and Departamento de
Matem\'atica, Universidade de Coimbra, 3000--143 Coimbra,
Portugal}
\thanks{
Partially supported by the Centre for Mathematics of
the University of Coimbra -- UIDB/00324/2020,
funded by the Portuguese Government through FCT/MCTES}
\email{mmc@mat.uc.pt}

\author{George Janelidze}
\address[George Janelidze]{Department of Mathematics and Applied Mathematics, University of Cape Town, Rondebosch 7700, South Africa}
\thanks{}
\email{george.janelidze@uct.ac.za}

\keywords{filtered preorder, lax comma category, effective descent morphism, map of adjunctions, monadic functor, chosen pullbacks, chosen coequalizers}

\subjclass[2010]{18C15, 18A20, 18A25, 18B35, 18E50}

\begin{abstract}

We characterize effective descent morphisms of what we call filtered preorders, and apply these results to slightly improve a known result, due to the first author and F. Lucatelli Nunes, on the effective descent morphisms in lax comma categories of preorders. A filtered preorder, over a fixed preorder $X$, is defined as a preorder $A$ equipped with a profunctor $X\to A$ and, equivalently, as a set $A$ equipped with a family $(A_x)_{x\in X}$ of upclosed subsets of $A$ with $x'\leqslant x\Rightarrow A_x\subseteq A_{x'}$.

\end{abstract}

\date{\today}

\maketitle

\section{Introduction}

Throughout this paper, by a \textit{preorder} we mean a set equipped with a reflexive transitive relation, and we write $X=(X,\leqslant)$ for a fixed preorder. Such preorders were simply called ordered sets, e.g. in \cite{[CL2023]}, and keeping the notation of \cite{[CL2023]}, we will write:
\begin{itemize}
	\item $\Ord$ for the category of preorders;
	\item $\Ord//X$ for the lax comma category of $\Ord$ over $X$, and we recall that its objects are the same as in $\Ord/X=(\Ord\downarrow X)$, while a morphism
\[f:(A,\alpha)\to(B,\beta)\] in $\Ord//X$ is a morphism $f:A\to B$ in $\Ord$ with $\alpha\leqslant\beta f$, that is, with \[\alpha(a)\leqslant\beta f(a)\] for all $a\in A$.
\end{itemize}
Considering one of the problems formulated in \cite{[CL2023]}, namely the problem of characterizing effective descent morphisms in $\Ord//X$, we found a `nicer' category $\Ord_X$, of what we called `filtered preorders', where this problem turned out to be easy enough to solve it completely. It also turned out that dealing with $\Ord_X$ helps to improve what was done in \cite{[CL2023]} with $\Ord//X$. Here, a filtered preorder, over a fixed preorder $X$, is defined as a preorder $A$ equipped with a profunctor $X\to A$ and, equivalently, as a set $A$ equipped with a family $(A_x)_{x\in X}$ of upclosed subsets of $A$ with $x'\leqslant x\Rightarrow A_x\subseteq A_{x'}$.

The paper is organized as follows:

Section 2 is devoted to general categorical remarks, notably involving the notion of map of adjunctions which was briefly mentioned in \cite{[M1971]}. These remarks should be reformulated in the style of \cite{[S1972]} one day, but we did not go further than what we needed for our purposes. We should particularly mention the contrast between `strict' and `pseudo-', which appears as the contrast between Corollary 9.6 in \cite{[L2018]} and our Corollary \ref{cor:p1p2}(b). We are satisfied with `strict' since we are using it for functors that preserve relevant chosen limits and colimits (specifically, pullbacks and coequalizers).

Section 3 introduces filtered preorders and shows how their category extends the category $\Ord//X$, while the purpose of Section 4 is to characterize effective descent morphisms in $\Ord_X$, and Section 5 gives a new class of effective descent morphisms in $\Ord//X$ slightly improving Theorem 3.9 of \cite{[CL2023]}.

\section{General remarks on monadicity and descent}

In this section we first deal with a fixed \emph{map $(P,P')\colon\Phi\to X$ of adjunctions} in the sense of \cite{[M1971]} (see Section 7 of Chapter IV therein) displayed as \[\xymatrix{\mathcal{U}\ar[d]_P\ar[r]^\Phi&\mathcal{U}'\ar[d]^{P'}\\\mathcal{W}\ar[r]_X&\mathcal{W}'}\] In this diagram $P$ and $P'$ are functors while
\[\Phi=(\Phi_!,\Phi^*,\eta^\Phi,\varepsilon^\Phi)\colon\mathcal{U}\to\mathcal{U}'\,\,\,\mathrm{and}\,\,\, X=(X_!,X^*,\eta^X,\varepsilon^X)\colon\mathcal{W}\to\mathcal{W}'\]
are adjunctions with \[X_!P=P'\Phi_!,\,\,P\Phi^*=X^*P',\,\,P\eta^\Phi=\eta^XP,\,\,P'\varepsilon^\Phi=\varepsilon^XP'.\]
Equivalently, we can present it as an adjunction \[((\Phi_!,X_!),(\Phi^*,X^*),(\eta^\Phi,\eta^X),(\varepsilon^\Phi,\varepsilon^X))\colon (\mathcal{U},P,\mathcal{W})\to(\mathcal{U}',P',\mathcal{W}'),\] in the 2-category $\ArrCAT$ ($=\Cat^\mathbf{2}$ in the notation of, e.g., \cite{[LS2002]}) of arrows of the category of categories. In order to make this clear, let us just recall that:
\begin{itemize}
	\item the objects of $\ArrCAT$ are all functors $F\colon \mathcal{A}\to\mathcal{B}$, written as triples $(\mathcal{A},F,\mathcal{B})$;
	\item a morphism $(\mathcal{A},F,\mathcal{B})\to(\mathcal{A}',F',\mathcal{B}')$ is a pair of functors $(K,L)$ making the diagram \[\xymatrix{\mathcal{A}\ar[d]_F\ar[r]^K&\mathcal{A}'\ar[d]^{F'}\\\mathcal{B}\ar[r]_L&\mathcal{B}'}\] commute;
	\item for morphisms $(K,L),(M,N)\colon (\mathcal{A},F,\mathcal{B})\to(\mathcal{A}',F',\mathcal{B}')$, a 2-cell from $(K,L)$ to $(M,N)$ is a pair of natural transformations $(\sigma,\tau)=$
	\[\xymatrix{\mathcal{A}\ar[d]_F\ar@/^2.5mm/[rr]^-K\ar@/_2mm/[rr]^{\downarrow\sigma}_-M&&\mathcal{A}'\ar[d]^{F'}\\
\mathcal{B}\ar@/^2.5mm/[rr]^-L\ar@/_2mm/[rr]^{\downarrow\tau}_-N&&\mathcal{B}'}\] with $F'\sigma=\tau F$.
\end{itemize}

Our first remark is: the equations $P\eta^\Phi=\eta^XP$ and $P'\varepsilon^\Phi=\varepsilon^XP'$ immediately imply:

\begin{lem}\label{lem:21}
	For the map of adjunctions above:
	\begin{enumerate}
		\item if $\eta^\Phi$ is an isomorphism and $P$ is surjective on objects, then $\eta^X$ is an isomorphism;
		\item if $\varepsilon^\Phi$ is an isomorphism and $P'$ is surjective on objects, then $\varepsilon^X$ is an isomorphism;
		\item in particular, if $\Phi$ is a category equivalence and both $P$ and $P'$ are surjective on objects, then $X$ is a category equivalence.\qed
	\end{enumerate}
\end{lem}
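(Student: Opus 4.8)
The plan is to deduce everything from the two identities $P\eta^\Phi=\eta^X P$ and $P'\varepsilon^\Phi=\varepsilon^X P'$ componentwise, using the fact that a surjective-on-objects functor hits every object.

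\medskip

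For part (a), fix an object $W$ of $\mathcal{W}$. Since $P$ is surjective on objects, choose an object $U$ of $\mathcal{U}$ with $PU=W$. Then the component $\eta^X_W=\eta^X_{PU}=(P\eta^\Phi)_U=P(\eta^\Phi_U)$, and since $\eta^\Phi_U$ is an isomorphism in $\mathcal{U}'$ and functors preserve isomorphisms, $\eta^X_W=P(\eta^\Phi_U)$ is an isomorphism in $\mathcal{W}'$. As $W$ was arbitrary, $\eta^X$ is a natural isomorphism. Part (b) is entirely dual: fix an object $W'$ of $\mathcal{W}'$, pick $U'$ in $\mathcal{U}'$ with $P'U'=W'$ using surjectivity on objects of $P'$, and then $\varepsilon^X_{W'}=\varepsilon^X_{P'U'}=P'(\varepsilon^\Phi_{U'})$ is an isomorphism because $\varepsilon^\Phi_{U'}$ is. Part (c) is immediate: if $\Phi$ is an equivalence then (by the standard criterion) its unit $\eta^\Phi$ and counit $\varepsilon^\Phi$ are isomorphisms, so (a) and (b) apply and give that $\eta^X$ and $\varepsilon^X$ are isomorphisms, whence the adjunction $X$ is an adjoint equivalence, in particular an equivalence.

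\medskip

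There is essentially no obstacle here: the only thing to be slightly careful about is the bookkeeping of the indexing objects — one must evaluate the natural transformation equation $P\eta^\Phi=\eta^X P$ at the chosen object $U$ rather than at $W$, and remember that the left-hand side is by definition the whiskering, so its $U$-component is $P(\eta^\Phi_U)$ while the right-hand side's $U$-component is $\eta^X_{PU}$. Everything else is just "a functor preserves isomorphisms" plus "surjective on objects means every object is hit". The statement is flagged in the text as an immediate consequence of the two displayed equations, so a short paragraph-style proof, or even the \qed already present in the statement, suffices.
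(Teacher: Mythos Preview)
Your argument is exactly the one the paper has in mind (the lemma is stated with a \qed\ and introduced as an immediate consequence of $P\eta^\Phi=\eta^X P$ and $P'\varepsilon^\Phi=\varepsilon^X P'$). One small bookkeeping slip: $\eta^\Phi_U$ lives in $\mathcal{U}$, not $\mathcal{U}'$, and hence $P(\eta^\Phi_U)=\eta^X_W$ lives in $\mathcal{W}$, not $\mathcal{W}'$; this does not affect the argument.
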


Next, our map of adjunctions above determines, whenever the categories $\mathcal{U}'$ and $\mathcal{W}'$ have chosen coequalizers preserved by the functor $P'$, its \textit{derived map of adjunctions}
\[\xymatrix{\mathcal{U}^{T^\phi}\ar[d]_{\tilde{P}}\ar[r]&\mathcal{U}'\ar[d]^{P'}\\\mathcal{W}^{T^X}\ar[r]&\mathcal{W}'}\]
where: $T^\phi$ and $T^X$ are the monads determined by the adjunctions $\Phi$ and $X$, respectively; $\tilde{P}$ is induced by $(P,P')$; and the horizontal arrows are the comparison adjunctions. If $(P,P')$ is a split epimorphism of adjunctions, then $\tilde{P}$ also is a split epimorphism and, using Lemma \ref{lem:21}, one can easily show that monadicity of $\Phi$ implies monadicity of $X$. The precise statement is:

\begin{lem}\label{lem:22}
	Let $(P,P')\colon \Phi\to X$ and $(I,I')\colon X\to\Phi$ be maps of adjunctions with $PI=1_{\mathcal{W}}$ and $P'I'=1_{\mathcal{W}'}$. If the categories $\mathcal{U}'$ and $\mathcal{W}'$ have chosen coequalizers preserved by the functors $P'$ and $I'$, then premonadicity of $\Phi$ implies premonadicity of $X$ and monadicity of $\Phi$ implies monadicity of $X$.\qed
\end{lem}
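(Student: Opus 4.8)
The plan is to deduce the statement about $X$ from the corresponding statement about $\Phi$ by transporting everything along the split epimorphism $(P,P')$, using the derived map of adjunctions together with Lemma \ref{lem:21}. First I would pass to the derived maps of adjunctions: since $\mathcal{U}'$ and $\mathcal{W}'$ have chosen coequalizers preserved by $P'$ and by $I'$, both $(P,P')$ and $(I,I')$ induce derived maps of adjunctions
\[
\xymatrix{\mathcal{U}^{T^\Phi}\ar[d]_{\tilde P}\ar[r]&\mathcal{U}'\ar[d]^{P'}\\\mathcal{W}^{T^X}\ar[r]&\mathcal{W}'}\qquad\qquad
\xymatrix{\mathcal{W}^{T^X}\ar[d]_{\tilde I}\ar[r]&\mathcal{W}'\ar[d]^{I'}\\\mathcal{U}^{T^\Phi}\ar[r]&\mathcal{U}'},
\]
and I would check that functoriality of the ``derived'' construction gives $\tilde P\tilde I=1_{\mathcal{W}^{T^X}}$, so that $\tilde P$ is again a split epimorphism (in particular surjective on objects), just as $P$ and $P'$ are (a split epimorphism of categories is surjective on objects). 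Here I am using that the monad $T^X$ is indeed the one induced on $\mathcal{W}$ by the adjunction $X$, and that $\tilde P$, $\tilde I$ cover $P$, $P'$ in the appropriate sense; these are the bookkeeping facts behind the phrase ``derived map of adjunctions''.

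Next I would recall what premonadicity and monadicity mean in terms of the comparison adjunctions appearing in the derived maps. The comparison adjunction for $\Phi$ is $(\Phi_!,\Phi^*,\eta^\Phi,\varepsilon^\Phi)$ viewed against $\mathcal{U}^{T^\Phi}$, with unit, say, $\eta^{K^\Phi}$ and counit $\varepsilon^{K^\Phi}$; premonadicity of $\Phi$ is the assertion that $\eta^{K^\Phi}$ is an isomorphism, and monadicity is the assertion that both $\eta^{K^\Phi}$ and $\varepsilon^{K^\Phi}$ are isomorphisms (equivalently, the comparison functor is an equivalence). The same description applies to $X$ with the comparison adjunction $K^X$ appearing in the first derived diagram. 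The point of the derived map of adjunctions is precisely that the comparison adjunction for $X$ sits over the comparison adjunction for $\Phi$ along the pair $(\tilde P,P')$ — that is, $(\tilde P,P')$ is itself a map of adjunctions from the comparison adjunction $K^\Phi$ to the comparison adjunction $K^X$ — so that $P'\eta^{K^\Phi}$ factors through $\eta^{K^X}\tilde P$ in the manner required by Lemma \ref{lem:21}.

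Now the argument is immediate. If $\Phi$ is premonadic, then $\eta^{K^\Phi}$ is an isomorphism; applying Lemma \ref{lem:21}(a) to the map of adjunctions $(\tilde P,P')\colon K^\Phi\to K^X$, with $\tilde P$ surjective on objects, yields that $\eta^{K^X}$ is an isomorphism, i.e.\ $X$ is premonadic. If in addition $\Phi$ is monadic, then $\varepsilon^{K^\Phi}$ is also an isomorphism; applying Lemma \ref{lem:21}(b), with $P'$ surjective on objects, yields that $\varepsilon^{K^X}$ is an isomorphism as well, so $X$ is monadic. (Alternatively one may invoke Lemma \ref{lem:21}(c) directly for the monadic case.)

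The main obstacle is not the final three-line deduction but the verification that the derived-map-of-adjunctions construction behaves functorially and that $(\tilde P,P')$ really is a map of adjunctions between the two comparison adjunctions with the identities required to feed Lemma \ref{lem:21} — in other words, that $\tilde P$ covers $P$, that $P'$ commutes suitably with the comparison functors, and that the units and counits of the comparison adjunctions are matched by $\tilde P$ and $P'$. Once the relevant naturality squares are written out this is a routine (if slightly tedious) check using only the defining equations $X_!P=P'\Phi_!$, $P\Phi^*=X^*P'$, $P\eta^\Phi=\eta^XP$, $P'\varepsilon^\Phi=\varepsilon^XP'$ and the preservation of the chosen coequalizers by $P'$ and $I'$; I would carry it out in the proof but would not dwell on the diagram chases here.
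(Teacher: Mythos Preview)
Your proposal is correct and follows essentially the same route the paper sketches just before the lemma: form the derived map of adjunctions between the two comparison adjunctions, use the section $(I,I')$ to see that $\tilde P$ is again a split epimorphism (hence surjective on objects), and then apply Lemma~\ref{lem:21} to the comparison units and counits. The paper leaves all the bookkeeping you describe to the reader, so your extra paragraph on the verification that $(\tilde P,P')$ really is a map of the comparison adjunctions is exactly the ``one can easily show'' step the paper omits.
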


Let $\mathcal{C}_0$ and $\mathcal{C}_1$ be categories with chosen pullbacks, and $S_1\colon \mathcal{C}_1\to\mathcal{C}_0$ a functor that preserves them. Then any morphism $p_1\colon E_1\to B_1$ in $\mathcal{C}_1$ determines a map
\[\xymatrix{(\mathcal{C}_1\downarrow E_1)\ar[d]_P\ar[rrr]^-{(p_{1!},p_1^*,\eta^{p_1},\varepsilon^{p_1})}&&&(\mathcal{C}_1\downarrow B_1)\ar[d]^{P'}\\(\mathcal{C}_0\downarrow E_0)\ar[rrr]_-{(p_{0!},p_0^*,\eta^{p_0},\varepsilon^{p_0})}&&&(\mathcal{C}_0\downarrow B_0)}\]
of adjunctions, in which $(p_0\colon E_0\to B_0)=S_1(p_1\colon E_1\to B_1)$, $P$ and $P'$ are induced by $S_1$, and the horizontal arrows are the suitable change-of-base adjunctions. From Lemma \ref{lem:22}, having in mind the monadic approach to descent theory (see e.g. \cite{[JT1994]}) we obtain:

\begin{cor}\label{cor:eff}
	Let $\mathcal{C}_0$ and $\mathcal{C}_1$ be categories with chosen pullbacks and chosen coequalizers, and $S_1\colon \mathcal{C}_1\to\mathcal{C}_0$ and $J_1\colon \mathcal{C}_0\to\mathcal{C}_1$ functors that preserves them and have $S_1J_1=1_{\mathcal{C}_0}$. Then the functor $S_1$ sends descent morphisms in $\mathcal{C}_1$ to descent morphisms in $\mathcal{C}_0$ and effective descent morphisms in $\mathcal{C}_1$ to effective descent morphisms in $\mathcal{C}_0$.\qed
\end{cor}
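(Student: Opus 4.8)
The plan is to translate ``(effective) descent morphism'' into ``(pre)monadicity of an adjunction'' via the monadic approach to descent and then feed the translation into Lemma~\ref{lem:22}. Recall that, for a category $\mathcal{C}$ with chosen pullbacks, a morphism $q\colon E\to B$ is a descent morphism precisely when the associated change-of-base adjunction $(q_!,q^*,\eta^q,\varepsilon^q)\colon(\mathcal{C}\downarrow E)\to(\mathcal{C}\downarrow B)$ is premonadic, and an effective descent morphism precisely when it is monadic; this is the form of descent theory recalled in \cite{[JT1994]}. As premonadicity is part of monadicity, both assertions of the statement will follow from one argument: I fix a descent (resp.\ effective descent) morphism $p_1\colon E_1\to B_1$ of $\mathcal{C}_1$, put $(p_0\colon E_0\to B_0):=S_1(p_1)$, and must show that the change-of-base adjunction along $p_0$ is premonadic (resp.\ monadic), knowing that the one along $p_1$ is.

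The construction displayed just before the statement, applied to $S_1$ and $p_1$, provides a map of adjunctions $(P,P')$ from the change-of-base adjunction along $p_1$ to the change-of-base adjunction along $p_0$, with $P$ and $P'$ induced by $S_1$. The slices $(\mathcal{C}_1\downarrow B_1)$ and $(\mathcal{C}_0\downarrow B_0)$ inherit chosen coequalizers from $\mathcal{C}_1$ and $\mathcal{C}_0$, and $P'$ preserves them because it is induced by the coequalizer-preserving $S_1$. So, to be in a position to apply Lemma~\ref{lem:22} (with $\Phi$ the change-of-base adjunction along $p_1$ and $X$ the one along $p_0$), it suffices to exhibit $(P,P')$ as a split epimorphism of adjunctions, i.e.\ to produce a map of adjunctions $(I,I')\colon X\to\Phi$ with $PI=1$ and $P'I'=1$ and with $I'$ preserving chosen coequalizers: then Lemma~\ref{lem:22} transports premonadicity (resp.\ monadicity) from $\Phi$ to $X$, which is exactly what is wanted.

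The splitting $(I,I')$ is to be manufactured from $J_1$, and this is where the real work is. The point is that $J_1$ on its own does not map $(\mathcal{C}_0\downarrow E_0)$ into $(\mathcal{C}_1\downarrow E_1)$ --- the functor it induces lands in the slices over $J_1(E_0)$ and $J_1(B_0)$ --- so one must combine $J_1$ with the identity $S_1J_1=1_{\mathcal{C}_0}$ in order to land in the correct slices and obtain functors $I\colon(\mathcal{C}_0\downarrow E_0)\to(\mathcal{C}_1\downarrow E_1)$ and $I'\colon(\mathcal{C}_0\downarrow B_0)\to(\mathcal{C}_1\downarrow B_1)$. One then checks: (i) that $(I,I')$ is a \emph{strict} map of adjunctions, i.e.\ commutes on the nose with the two change-of-base adjunctions --- a Beck--Chevalley-type verification, made convenient by normalizing the chosen pullbacks so that pullback along an identity is the identity; (ii) that $PI=1$ and $P'I'=1$, which is precisely what $S_1J_1=1_{\mathcal{C}_0}$ gives; and (iii) that $I'$ preserves chosen coequalizers, which uses that $J_1$ does. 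I expect step (i) to be the main obstacle; once $(I,I')$ is in place, Lemma~\ref{lem:22} closes the proof for descent and effective descent morphisms simultaneously.
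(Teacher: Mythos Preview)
Your overall plan is exactly the paper's: reinterpret (effective) descent as (pre)monadicity of the change-of-base adjunction and then invoke Lemma~\ref{lem:22} for the map of adjunctions displayed just before the corollary. In that sense you are not diverging from the paper, which offers no more detail than ``from Lemma~\ref{lem:22} we obtain''.

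Where your proposal stalls is precisely the step you yourself flag as ``the real work'': producing the section $(I,I')\colon X\to\Phi$ with $\Phi$ the change-of-base adjunction along $p_1$. The functor $J_1$ induces, via the same construction the paper applies to $S_1$, functors $(\mathcal{C}_0\downarrow E_0)\to(\mathcal{C}_1\downarrow J_1(E_0))$ and $(\mathcal{C}_0\downarrow B_0)\to(\mathcal{C}_1\downarrow J_1(B_0))$; but $J_1(E_0)=J_1S_1(E_1)$ and $J_1(B_0)=J_1S_1(B_1)$ need not agree with $E_1$ and $B_1$, and the identity $S_1J_1=1_{\mathcal{C}_0}$ only compares $S_1J_1(E_0)$ with $E_0$, not $J_1(E_0)$ with $E_1$. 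So the sentence ``$PI=1$ and $P'I'=1$, which is precisely what $S_1J_1=1_{\mathcal{C}_0}$ gives'' presupposes the very construction that is missing. What the hypotheses \emph{do} give cleanly is a split pair of maps of adjunctions between the change-of-base adjunction along $J_1(p_0)$ and that along $p_0$ (apply the paper's displayed construction once with $S_1$ and once with $J_1$, and use $S_1J_1=1$); Lemma~\ref{lem:22} then yields ``$J_1(p_0)$ (effective) descent $\Rightarrow$ $p_0$ (effective) descent''. That is the form actually needed in every application the paper makes of Corollary~\ref{cor:eff} (where one only has to argue about $p_0$), but it is weaker than the corollary as stated for an arbitrary $p_1$, and your proposal does not bridge the gap from $p_1$ to $J_1S_1(p_1)$.
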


Now consider a cube diagram
\[\xymatrix@C=20pt@R=20pt{&\mathcal{V}\ar[ddd]^(0.6){Q}\ar[rrrr]^-\Psi&&&&\mathcal{V'}\ar[ddd]^(0.6){Q'}\\
\mathcal{U}\times_{\mathcal{W}}\mathcal{V}
\ar[ur]^{\Pi_2}\ar[rrrr]^-{\Phi\times_X\Psi}\ar[ddd]_(0.4){\Pi_1}&&&&\mathcal{U}'\times_{\mathcal{W}'}\mathcal{V}'
\ar[ddd]_(0.4){\Pi'_1}\ar[ur]^{\Pi'_2}\\\\&\mathcal{W}\ar[rrrr]_X&&&&\mathcal{W'}\\\mathcal{U}\ar[ur]_P\ar[rrrr]_-\Phi&&&&\mathcal{U}'\ar[ur]_{P'}}\]
whose left-hand and the right-hand faces are pullbacks in the category of categories, while all other faces are maps of adjunctions.
Here we assume that  $\mathcal{U}\times_{\mathcal{W}}\mathcal{V}$ and $\mathcal{U}'\times_{\mathcal{W}'}\mathcal{V}'$ are constructed `as usually', that is, as suitable categories of pairs, and the adjunction $\Phi\times_X\Psi$ is defined by
\[(\Phi\times_X\Psi)_!(U,V)=(\Phi_!(U),\Psi_!(V)),\,\,(\Phi\times_X\Psi)^*(U',V')=(\Phi^*(U'),\Psi^*(V')),\]
\[\eta^{\Phi\times_X\Psi}_{(U,V)}=(\eta^\Phi_U,\eta^\Psi_V),\,\,\varepsilon^{\Phi\times_X\Psi}_{(U',V')}=
(\varepsilon^\Phi_{U'},\varepsilon^\Psi_{V'}).\]

From the construction of $\eta^{\Phi\times_X\Psi}_{(U,V)}$ and $\varepsilon^{\Phi\times_X\Psi}_{(U',V')}$, we obtain:
\begin{lem}\label{lem:equiv} Under the assumptions above:
	\begin{enumerate}
		\item if $\eta^\Phi$ and $\eta^\Psi$ are isomorphisms, then so is $\eta^{\Phi\times_X\Psi}$;
		\item if $\varepsilon^\Phi$ and $\varepsilon^\Psi$ are isomorphisms, then so is $\varepsilon^{\Phi\times_X\Psi}$;
		\item if $\Phi$ and $\Psi$ are category equivalences, then so is $\Phi\times_X\Psi$.\qed
	\end{enumerate}
\end{lem}

If the categories $\mathcal{U'}$, $\mathcal{V'}$, and $\mathcal{W'}$ have chosen coequalizers preserved by the functors $P'$ and $Q'$, then we have the \textit{derived cube diagram}	
\[\xymatrix@C=20pt@R=20pt{&\mathcal{V}^{T^\Psi}\ar[ddd]^(0.6){\tilde{Q}}\ar[rrrr]&&&&\mathcal{V'}\ar[ddd]^(0.6){Q'}\\(\mathcal{U}
\times_{\mathcal{W}}\mathcal{V})^{T^{\Phi\times_X\Psi}}\ar[ur]^{\tilde{\Pi}_2}\ar[rrrr]
\ar[ddd]_(0.4){\tilde{\Pi}_1}&&&&\mathcal{U}'\times_{\mathcal{W}'}\mathcal{V}'\ar[ddd]_(0.4){\Pi'_1}
\ar[ur]^{\Pi'_2}\\\\&\mathcal{W}^{T^X}\ar[rrrr]&&&&\mathcal{W'}\\\mathcal{U}^{T^\phi}\ar[ur]_{\tilde{P}}
\ar[rrrr]&&&&\mathcal{U}'\ar[ur]_{P'}}\]
obtained in an obvious way using derived maps of adjunctions. Note, in particular, that $(\mathcal{U}\times_{\mathcal{W}}\mathcal{V})^{T^{\Phi\times_X\Psi}}$ can be identified with $\mathcal{U}^{T^\phi}\times_{\mathcal{W}^{T^X}}\mathcal{V}^{T^{\Psi}}$. And, from Lemma \ref{lem:equiv}, we obtain

\begin{lem}\label{lem:25}
	For maps $(P,P')\colon \Phi\to X$ and $(Q,Q')\colon \Psi\to X$ of adjunctions, premonadicity of $\Phi$ and $\Psi$ implies premonadicity of $\Phi\times_X\Psi$ and monadicity of $\Phi$ and $\Psi$ implies premonadicity of $\Phi\times_X\Psi$, provided the categories $\mathcal{U}'$, $\mathcal{V}'$, and $\mathcal{W}'$ have chosen coequalizers preserved by the functors $P'$ and $Q'$. \qed
\end{lem}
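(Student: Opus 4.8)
The plan is to reduce Lemma \ref{lem:25} to Lemma \ref{lem:22} by producing a suitable split epimorphism of adjunctions over $\Phi\times_X\Psi$. First I would observe that, by the construction of $\eta^{\Phi\times_X\Psi}$ and $\varepsilon^{\Phi\times_X\Psi}$, the two projections $\Pi_1,\Pi_2$ of the back face, together with their primed counterparts, are themselves maps of adjunctions; this is exactly what the cube diagram and the explicit formulas for $\Phi\times_X\Psi$ record. Moreover, for each object $W$ of $\mathcal{W}$ the functor $\Phi\times_X\Psi$ restricted to the fibre over $W$ is, up to the identifications in play, a product of fibres, so a section of $P$ (resp. $Q$) gives a section of $\Pi_1$ (resp. $\Pi_2$): concretely, using sections $I,I'$ of $P,P'$, the pair $(U,V)\mapsto((U,\Psi\text{-part}),\dots)$ will not quite work, so instead I would build the section of the map of adjunctions $(\Pi_1,\Pi'_1)\colon \Phi\times_X\Psi\to\Phi$ along the functor $\mathcal{U}\to\mathcal{U}\times_{\mathcal W}\mathcal{V}$, $U\mapsto(U, J(PU))$ for a chosen section $J$ of $Q$ over $\mathcal{W}$ — but since $Q$ need not admit a section, the right move is to split $\Phi\times_X\Psi$ over $\Phi$ \emph{and} over $\Psi$ separately and intersect the conclusions.

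More precisely, the key steps are: (1) note that $(\Pi_1,\Pi'_1)\colon \Phi\times_X\Psi\to\Phi$ is a map of adjunctions (from the cube), and that it is a split epimorphism of adjunctions in the sense of Lemma \ref{lem:22} whenever $(Q,Q')\colon \Psi\to X$ is a split epimorphism, the section being induced fibrewise; symmetrically for $\Pi_2$ and $P$. (2) For \emph{premonadicity}: premonadicity of $\Psi$ does not directly give a section of $Q$, so instead I would argue that premonadicity is detected fibrewise and that the fibre of $\Phi\times_X\Psi$ over $W$ is (equivalent to) the product of the fibre of $\Phi$ over $PW$ — no, over the appropriate object — with the fibre of $\Psi$; since a product of fully faithful functors is fully faithful, and premonadicity of an adjunction is equivalent to the comparison functor being fully faithful, premonadicity of $\Phi$ and $\Psi$ yields premonadicity of $\Phi\times_X\Psi$ by Lemma \ref{lem:equiv}(a) applied to the counit $\varepsilon$ of the relevant composite. (3) For the monadic-implies-premonadic half: monadicity of $\Phi$ and $\Psi$ in particular gives premonadicity of each, so the conclusion follows from step (2); the derived cube diagram and the identification $(\mathcal{U}\times_{\mathcal W}\mathcal{V})^{T^{\Phi\times_X\Psi}}\simeq \mathcal{U}^{T^\phi}\times_{\mathcal{W}^{T^X}}\mathcal{V}^{T^\Psi}$ are what make "fibrewise" arguments legitimate here, since passing to algebras commutes with the pullback.

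Concretely, I would phrase the whole thing through Lemma \ref{lem:equiv}: the comparison adjunction $\mathcal{U}\times_{\mathcal W}\mathcal{V}\to(\mathcal{U}\times_{\mathcal W}\mathcal{V})^{T^{\Phi\times_X\Psi}}$ is, under the above identification, exactly the "product over $\mathcal{W}^{T^X}$" of the comparison adjunctions for $\Phi$ and $\Psi$; premonadicity (resp. monadicity) of an adjunction is the assertion that the unit (resp. unit and counit) of its comparison adjunction is an isomorphism; so Lemma \ref{lem:equiv}(a) (resp. (a) and (b)) applied to these comparison adjunctions gives that the unit (resp. unit and counit) of the comparison adjunction of $\Phi\times_X\Psi$ is an isomorphism — i.e. premonadicity (resp. monadicity, but we only claim premonadicity) of $\Phi\times_X\Psi$. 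The main obstacle is purely bookkeeping: one must check that the derived cube is genuinely a cube of maps of adjunctions and that the identification $(\mathcal{U}\times_{\mathcal W}\mathcal{V})^{T^{\Phi\times_X\Psi}}\simeq \mathcal{U}^{T^\phi}\times_{\mathcal{W}^{T^X}}\mathcal{V}^{T^\Psi}$ is compatible with the comparison functors on the nose, not merely up to isomorphism, since Lemma \ref{lem:equiv} is a strict statement; granting the strict compatibility already asserted in the text ("can be identified with"), the proof is immediate.
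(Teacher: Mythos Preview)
Your first two paragraphs are detours that you yourself abandon: reducing to Lemma~\ref{lem:22} would require a section of $(Q,Q')$ (or of $(P,P')$), and none is assumed; the vague ``fibrewise'' talk does not repair this. Drop them.

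Your final paragraph is exactly the paper's argument. The derived cube is again a cube of the required shape (its right face is the original pullback, its left face is a pullback by the identification $(\mathcal{U}\times_{\mathcal{W}}\mathcal{V})^{T^{\Phi\times_X\Psi}}\cong\mathcal{U}^{T^\Phi}\times_{\mathcal{W}^{T^X}}\mathcal{V}^{T^\Psi}$), and one simply applies Lemma~\ref{lem:equiv} to its horizontal comparison adjunctions. The paper's entire proof is the phrase ``from Lemma~\ref{lem:equiv}, we obtain''.

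Two corrections to that paragraph. First, the comparison functor runs from $\mathcal{U}'\times_{\mathcal{W}'}\mathcal{V}'$ to $(\mathcal{U}\times_{\mathcal{W}}\mathcal{V})^{T^{\Phi\times_X\Psi}}$, not from $\mathcal{U}\times_{\mathcal{W}}\mathcal{V}$; the domain of the comparison is always the codomain side of the original adjunction. Second, premonadicity means the comparison functor is fully faithful, and since the comparison functor is the \emph{right} adjoint in the comparison adjunction, this is equivalent to the \emph{counit} (not the unit) of that adjunction being invertible. So the clause of Lemma~\ref{lem:equiv} you need for the premonadicity statement is (b), not (a). With these two fixes your last paragraph stands on its own as a complete and correct proof.
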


Let
\[\xymatrix{\mathcal{C}_1\ar[r]^-{S_1}&\mathcal{C}_0&\mathcal{C}_2\ar[l]_-{S_2},}\]
be a cospan of categories having chosen pullbacks preserved by the functors $S_1$ and $S_2$. Given a morphism $(p_1,p_2)\colon (E_1,E_2)\to(B_1,B_2)$ in $\mathcal{C}_1\times_{\mathcal{C}_0}\mathcal{C}_2$, we can take the originally considered cube diagram to be
%LARGE DIAGRAM
\[\xymatrix@C=20pt@R=20pt{&(\mathcal{C}_2\downarrow E_2)\ar[ddd]^{Q}\ar[rr]&&(\mathcal{C}_2\downarrow B_2)\ar[ddd]^{Q'}\\((\mathcal{C}_1\times_{\mathcal{C}_0}\mathcal{C}_2)\downarrow(E_1,E_2))\ar[ur]^{\Pi_2}\ar[rr]\ar[ddd]_{\Pi_1}&&((\mathcal{C}_1\times_{\mathcal{C}_0}\mathcal{C}_2)\downarrow(B_1,B_2))\ar[ddd]_-{\Pi'_1}\ar[ur]^{\Pi'_2}\\\\&(\mathcal{C}_0\downarrow E_0)\ar[rr]&&(\mathcal{C}_0\downarrow B_0)\\(\mathcal{C}_1\downarrow E_1)\ar[ur]_P\ar[rr]&&(\mathcal{C}_1\downarrow B_1)\ar[ur]_{P'}}\]
where $(P,P')$ is induced by $S_1$, $(Q,Q')$ is induced by $S_2$, and horizontal arrows are the suitable change-of-base adjunctions. From Lemma \ref{lem:25}, since the change-of-base adjunction along a morphism is premonadic/monadic if and only if it is a descent/effective descent morphism (see e.g. \cite{[JT1994]}), we obtain:

\begin{cor}\label{cor:p1p2}
	If the categories $\mathcal{C}_i$ $(i=0,1,2)$ have chosen pullbacks and chosen coequalizers preserved by the functors $S_1$ and $S_2$, then:
	\begin{enumerate}
		\item if $p_1$ and $p_2$ are descent morphisms, then so is $(p_1,p_2)$;
		\item if $p_1$ and $p_2$ are effective descent morphisms, then so is $(p_1,p_2)$.\qed
	\end{enumerate}
\end{cor}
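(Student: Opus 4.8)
The plan is to recognize that Corollary \ref{cor:p1p2} is, as the surrounding text already indicates, nothing more than an instance of Lemma \ref{lem:25} read through the monadic approach to descent. So the first step is to set up the cube diagram displayed just before the statement, with $\mathcal{U}=(\mathcal{C}_1\downarrow E_1)$, $\mathcal{V}=(\mathcal{C}_2\downarrow E_2)$, $\mathcal{W}=(\mathcal{C}_0\downarrow E_0)$, and the primed categories the corresponding slices over the $B$'s, and with the adjunctions $\Phi=(p_{1!},p_1^*,\dots)$, $\Psi=(p_{2!},p_2^*,\dots)$, $X=(p_{0!},p_0^*,\dots)$ being the change-of-base adjunctions along $p_1$, $p_2$, $p_0$, where $p_0=S_1(p_1)=S_2(p_2)$. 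One must check that $\Pi_1,\Pi_2$ (and their primed versions) together with the identification $((\mathcal{C}_1\times_{\mathcal{C}_0}\mathcal{C}_2)\downarrow(E_1,E_2))\cong(\mathcal{C}_1\downarrow E_1)\times_{(\mathcal{C}_0\downarrow E_0)}(\mathcal{C}_2\downarrow E_2)$ do make the left and right faces genuine pullbacks in $\Cat$, and that $(P,P')$, $(Q,Q')$ are maps of adjunctions; this is routine from the construction of the change-of-base adjunctions, since $S_1$ and $S_2$ preserve the chosen pullbacks used to define the reindexing functors $p_i^*$.

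The second step is to feed this into Lemma \ref{lem:25}. For part (a): if $p_1$ and $p_2$ are descent morphisms, then by the dictionary between descent morphisms and premonadic change-of-base adjunctions (as in \cite{[JT1994]}) the adjunctions $\Phi$ and $\Psi$ are premonadic; Lemma \ref{lem:25} then gives that $\Phi\times_X\Psi$ is premonadic, and since $\Phi\times_X\Psi$ is exactly the change-of-base adjunction along $(p_1,p_2)$ in $\mathcal{C}_1\times_{\mathcal{C}_0}\mathcal{C}_2$ — here one uses that pullbacks in the pullback category $\mathcal{C}_1\times_{\mathcal{C}_0}\mathcal{C}_2$ are computed componentwise, so that $(p_1,p_2)^*$ is $(p_1^*,p_2^*)$ — we conclude $(p_1,p_2)$ is a descent morphism. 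For part (b) one argues identically with ``premonadic'' replaced by ``monadic'' and ``descent'' by ``effective descent''; note that Lemma \ref{lem:25} as stated only claims \emph{pre}monadicity of $\Phi\times_X\Psi$ even under monadicity of $\Phi$ and $\Psi$, so one needs the stronger conclusion, which follows because monadicity of $\Phi$ and $\Psi$ upgrades the relevant $\varepsilon$'s to isomorphisms via Lemma \ref{lem:equiv}(2) applied to the derived cube, yielding that the comparison functor for $\Phi\times_X\Psi$ is an equivalence — i.e. full monadicity, not merely premonadicity.

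The main obstacle I anticipate is precisely this last point: reconciling the ``premonadicity'' appearing in the conclusion of Lemma \ref{lem:25} with the ``effective descent'' (full monadicity) wanted in Corollary \ref{cor:p1p2}(b). The resolution is that effective descent for $(p_1,p_2)$ requires the comparison functor from $((\mathcal{C}_1\times_{\mathcal{C}_0}\mathcal{C}_2)\downarrow(B_1,B_2))$ to the category of descent data to be an equivalence, and one checks this componentwise: the derived cube identifies the category of descent data along $(p_1,p_2)$ with the fibre product of the categories of descent data along $p_1$ and along $p_2$ over that along $p_0$, and the comparison functor is the induced map of pullbacks; by Lemma \ref{lem:equiv}(3) (or by a direct two-out-of-three argument on the pullback square of categories), if the comparison functors for $p_1$ and $p_2$ are equivalences then so is the one for $(p_1,p_2)$. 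This is where the ``strict'' (chosen limits/colimits preserved on the nose) hypothesis earns its keep, as flagged in the introduction, because it makes the fibre products and the derived constructions commute strictly rather than up to coherent iso. Once this componentwise argument is in place, both parts follow, and the proof is essentially a matter of unwinding definitions.
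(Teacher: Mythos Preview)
Your proposal is correct and follows exactly the route the paper takes: instantiate the cube with slice categories and change-of-base adjunctions, then invoke Lemma~\ref{lem:25} together with the descent/monadicity dictionary from \cite{[JT1994]}. You are also right to flag that Lemma~\ref{lem:25} as printed only concludes \emph{pre}monadicity of $\Phi\times_X\Psi$ from monadicity of $\Phi$ and $\Psi$; this is a slip in the statement of the lemma, and your fix via Lemma~\ref{lem:equiv}(3) applied to the derived cube is precisely the intended argument.
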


\begin{example}\label{exa:27}
Let us choose the data above as follows:
	\begin{itemize}
		\item $\mathcal{C}_0$ to be the category of sets.
		\item $\mathcal{C}_1=\Ord$.
		\item $\mathcal{C}_2$ to be the category of pairs $(A,(A_x)_{x\in X})$, where $A$ is a set and $(A_x)_{x\in X}$ is a family of subsets of $A$ with $x'\leqslant x\Rightarrow A_x\subseteq A_{x'}$. A morphism \[f\colon (A,(A_x)_{x\in X})\to(B,(B_x)_{x\in X})\] in $\mathcal{C}_2$ is a morphism $f\colon A\to B$ in $\mathcal{C}_0$ with $f(A_x)\subseteq B_x$ for each $x\in X$.
		\item $S_1$ and $S_2$ to be the underlying set functors. Note that in all our categories here the pullbacks and coequalizers are \textit{chosen as for sets} and preserved by the functors $S_1$ and $S_2$; moreover, one could easily choose suitable right inverses of $S_1$ and $S_2$.
	\end{itemize}
We can write the objects of $\mathcal{C}_1\times_{\mathcal{C}_0}\mathcal{C}_2$ simply as $A=(A,(A_x)_{x\in X})$ assuming that $A$ is a set equipped with a preorder relation and a family $(A_x)_{x\in X}$ of subsets satisfying the condition above. A morphism $p\colon E\to B$ in $\mathcal{C}_1\times_{\mathcal{C}_0}\mathcal{C}_2$ is then a map $p\colon E\to B$ that preserves the preorder relation and has $p(E_x)\subseteq B_x$ for each $x\in X$. It is easy to show that $p$ is an effective descent morphism in $\mathcal{C}_2$ if and only if $p$ and all induced maps $p_x\colon E_x\to B_x$ are surjective. Then, putting this together with the description of effective descent morphisms in $\Ord$ in Proposition 3.4 of \cite{[JS2002]}, and applying Corollaries \ref{cor:eff} and \ref{cor:p1p2}, we conclude that

\begin{teo}
A morphism $p\colon E\to B$ in $\mathcal{C}_1\times_{\mathcal{C}_0}\mathcal{C}_2$ is an effective descent morphism if and only if:
	\begin{itemize}
		\item [(a)] for every $b_2\leqslant b_1\leqslant b_0$ in $B$, there exist $e_2\leqslant e_1\leqslant e_0$ in $E$ with $p(e_i)=b_i$ for each $i=0,1,2$;
		\item [(b)] $p$ and all induced maps $p_x\colon E_x\to B_x$ are surjective.
	\end{itemize}
\end{teo}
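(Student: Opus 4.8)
The plan is to obtain the theorem by combining two facts via the corollaries of Section 2. First, I would observe that $\mathcal{C}_1\times_{\mathcal{C}_0}\mathcal{C}_2$ is exactly the category of filtered preorders in the guise introduced in Example \ref{exa:27}, and that effective descent morphisms there are detected by applying Corollary \ref{cor:p1p2}(b) to the cospan $\Ord\xrightarrow{S_1}\mathsf{Set}\xleftarrow{S_2}\mathcal{C}_2$: a morphism $p$ is effective descent in the pullback category as soon as its image in $\mathcal{C}_1=\Ord$ is effective descent and its image in $\mathcal{C}_2$ is effective descent. (One also needs the reverse implication; for that I would invoke Corollary \ref{cor:eff} applied to the projection functors $\mathcal{C}_1\times_{\mathcal{C}_0}\mathcal{C}_2\to\mathcal{C}_1$ and $\to\mathcal{C}_2$, which have obvious right inverses and preserve the chosen pullbacks and coequalizers since everything is computed as for sets, so that effective descent in the product forces effective descent in each factor.) Thus condition (a)$\wedge$(b) should be shown equivalent to ``$p$ is effective descent in $\Ord$'' together with ``$p$ is effective descent in $\mathcal{C}_2$''.

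Next I would unpack the two factor conditions. For the $\mathcal{C}_2$-side, the claim stated in Example \ref{exa:27} is that $p$ is effective descent in $\mathcal{C}_2$ if and only if $p$ and every induced map $p_x\colon E_x\to B_x$ is surjective — this is exactly clause (b) of the theorem. For the $\Ord$-side, I would quote Proposition 3.4 of \cite{[JS2002]}: a morphism $p\colon E\to B$ of preorders is effective descent if and only if for every chain $b_2\leqslant b_1\leqslant b_0$ in $B$ there is a chain $e_2\leqslant e_1\leqslant e_0$ in $E$ with $p(e_i)=b_i$ — this is exactly clause (a). Putting the two equivalences side by side yields the theorem. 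So the proof is essentially a bookkeeping argument: identify the category, cite the two known descriptions, and invoke Corollaries \ref{cor:eff} and \ref{cor:p1p2}.

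The one genuinely non-formal step, which I would treat carefully, is the assertion inside Example \ref{exa:27} that $p$ is effective descent in $\mathcal{C}_2$ iff $p$ and all the $p_x$ are surjective. Here I would argue directly: $\mathcal{C}_2$ is a presheaf-like category over $\mathsf{Set}$ where an object is a set $A$ together with a functor-like assignment $x\mapsto A_x$, and descent along $p$ amounts to effectivity of descent data for the kernel pair. Since the forgetful functor to $\mathsf{Set}$ creates the relevant (co)limits, $p$ must at least be a surjection (effective descent morphisms in $\mathsf{Set}$ are the epis); and for the refined structure, a descent datum on a set $C$ over $B$ with $C\times_B E\cong$ appropriate object forces, for each $x$, the subset $C_x$ to be determined by $E_x$, which is possible for all choices precisely when each $p_x$ is onto. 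Conversely, if $p$ and all $p_x$ are surjective, one builds the descent object by taking $A=B$ as a set and $A_x=p(E_x)=B_x$, checking the comparison is an isomorphism. I expect this verification — really the only place where one does something rather than cites something — to be the main obstacle, though it is still routine; the rest of the argument is the formal machine of Section 2 doing its job, and in particular the passage from Corollary \ref{cor:p1p2}(b) to the ``if'' direction and from Corollary \ref{cor:eff} to the ``only if'' direction.
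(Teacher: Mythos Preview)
Your proposal is correct and follows essentially the same approach as the paper: the paper's argument is precisely to cite the $\mathcal{C}_2$ characterization (declared ``easy to show''), quote Proposition~3.4 of \cite{[JS2002]} for the $\Ord$ side, and then invoke Corollaries~\ref{cor:eff} and~\ref{cor:p1p2} to assemble the two pieces into the stated characterization. Your identification of the $\mathcal{C}_2$ verification as the only non-formal step, and your use of the projection functors with their evident sections to get the ``only if'' direction via Corollary~\ref{cor:eff}, match the paper's strategy exactly (and in fact supply more detail than the paper does).
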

\end{example}

\section{$X$-filtered preorders}

\begin{defi}
	An \emph{$X$-filtered preorder} is a pair $(A,\a)$, where $\mathsf{a}\colon X\to A$ is a $\{0,1\}$-profunctor; that is, $\mathsf{a}\subseteq X\times A$ is a relation satisfying
\[(x'\leqslant x\,\,\&\,\,a\leqslant a') \Rightarrow ((x,a)\in\mathsf{a}\Rightarrow (x',a')\in\mathsf{a})\]
for all $x,x'\in X$ and $a,a'\in A$. A morphism $f\colon (A,\mathsf{a})\to(B,\mathsf{b})$ in the category $\Ord_X$ of $X$-filtered preorders is a morphism $f\colon A\to B$ in $\Ord$ with $f\mathsf{a}\leqslant\mathsf{b}$, that is, with \[(x,a)\in\mathsf{a}\Rightarrow(x,f(a))\in\mathsf{b}\] for all $x\in X$ and $a\in A$.
\end{defi}

The reason for calling such pairs $(A,\mathsf{a})$ $X$-filtered preorders is that a profunctor $\mathsf{a}\colon X\to A$ can be equivalently described as an $X$-\textit{filtration} on $A$ defined as a family $(A_x)_{x\in X}$ of upclosed subsets of $A$ with $x'\leqslant x\Rightarrow A_x\subseteq A_{x'}$. The relationship between these two types of structure is straightforward and well known at various levels of generality; it is given by \[A_x=\mathsf{a}(x,-)=\{a\in A\mid(x,a)\in\mathsf{a}\}\]
or, equivalently, by
\[\mathsf{a}=\{(x,a)\in X\times A\mid a\in A_x\}.\]
If there is no danger of confusion, we will simply write
\[(A,\mathsf{a})=A=(A,(A_x)_{x\in X})\] (the second equality here is what we have already used in a more general situation in Example \ref{exa:27}).

We could also similarly describe these structures as families of $A$-indexed (for various $A$) families of subsets
\[\mathsf{a}(-,a)=\{x\in X\mid(x,a)\in\mathsf{a}\}\] of $X$, but that would be less useful since we are considering a fixed $X$, not a fixed $A$.

We will use the fully faithful functors
\[\xymatrix{\Ord//X\ar[r]^-{F_1}&\Ord_X\ar[r]^-{F_2}&\mathcal{C}_1\times_{\mathcal{C}_0}\mathcal{C}_2}\]
where $F_1$ is defined by \[F_1(A,\alpha)=(A,\{(x,a)\mid x\leqslant\alpha(a)\})\] and by requiring the diagram
\[\xymatrix{\Ord//X\ar[dr]_{\mathrm{forgetful}}\ar[rr]^-{F_1}&&\Ord_X\ar[dl]^{\mathrm{forgetful}}\\&\Ord}\] to commute, $\mathcal{C}_1\times_{\mathcal{C}_0}\mathcal{C}_2$ is as in Example \ref{exa:27}, and $F_2$ is the inclusion functor. The following proposition is also well known at various levels of generality, but instead of explaining that we give a (simple) direct proof:

\begin{prop}\label{prop:32}
	The image of $F_1\colon \Ord//X\to\Ord_X$, which is the same as its replete image, consists of those filtered preorders $A$ in which, for every $a\in A$, the subset $\mathsf{a}(-,a)$ of $X$ has a largest element; equivalently, there is a largest $x\in X$ with $a\in A_x$.
\end{prop}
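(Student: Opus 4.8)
The plan is to prove the two directions of the characterization directly, unwinding the definition of $F_1$. Recall that for $(A,\alpha)\in\Ord//X$ we have $F_1(A,\alpha)=(A,\mathsf{a})$ with $\mathsf{a}=\{(x,a)\mid x\leqslant\alpha(a)\}$, so that $\mathsf{a}(-,a)=\{x\in X\mid x\leqslant\alpha(a)\}=\mathop{\downarrow}\alpha(a)$, the principal downset of $\alpha(a)$. This set always has a largest element, namely $\alpha(a)$ itself (up to the equivalence $x\leqslant y\leqslant x$ inherent in working with preorders); equivalently $A_x=\{a\mid x\leqslant\alpha(a)\}$ and the largest $x$ with $a\in A_x$ is $\alpha(a)$. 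Thus every object in the image of $F_1$ has the stated property, and since the property is visibly invariant under isomorphism in $\Ord_X$, the image is contained in its replete image, which is contained in the class of filtered preorders with the property.

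For the converse, I would start from a filtered preorder $(A,\mathsf{a})$ such that for each $a\in A$ the set $\mathsf{a}(-,a)$ has a largest element, and \emph{choose} such an element, calling it $\alpha(a)$. The key verification is that $\alpha\colon A\to X$ is a morphism of preorders, i.e.\ that $a\leqslant a'$ implies $\alpha(a)\leqslant\alpha(a')$: from $\alpha(a)\in\mathsf{a}(-,a)$, i.e.\ $(\alpha(a),a)\in\mathsf{a}$, the profunctor condition $(x'\leqslant x\ \&\ a\leqslant a')\Rightarrow((x,a)\in\mathsf{a}\Rightarrow(x',a')\in\mathsf{a})$ applied with $x=x'=\alpha(a)$ gives $(\alpha(a),a')\in\mathsf{a}$, so $\alpha(a)\in\mathsf{a}(-,a')$, whence $\alpha(a)\leqslant\alpha(a')$ by maximality of $\alpha(a')$. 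Then I must check that $F_1(A,\alpha)=(A,\mathsf{a})$, i.e.\ that $\mathsf{a}=\{(x,a)\mid x\leqslant\alpha(a)\}$: the inclusion $\supseteq$ follows because $(\alpha(a),a)\in\mathsf{a}$ and $\mathsf{a}$ is downclosed in the first variable (again the profunctor condition, with $a=a'$), and $\subseteq$ is immediate since $(x,a)\in\mathsf{a}$ means $x\in\mathsf{a}(-,a)$, so $x\leqslant\alpha(a)$ by maximality. This shows $(A,\mathsf{a})$ is literally in the image of $F_1$, which is even slightly stronger than landing in the replete image, and together with the first paragraph proves that the image, the replete image, and the class described all coincide.

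The only subtlety — and it is a mild one — is the use of choice in selecting $\alpha(a)$ among possibly several largest elements: in a preorder "largest" means largest up to the equivalence relation $x\leqslant y\ \&\ y\leqslant x$, so the choice is made up to isomorphism in $X$, which is harmless since $\Ord$ identifies such elements only up to that equivalence and the two candidate values of $\alpha$ give isomorphic objects of $\Ord//X$; in particular it does not affect which object of $\Ord_X$ we reconstruct. I do not expect any real obstacle here; the proof is a direct unwinding of definitions, and the main point to state carefully is simply the verification that $\alpha$ is order-preserving, which is where the profunctor axiom does its work.
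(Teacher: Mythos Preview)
Your proof is correct and follows essentially the same approach as the paper's: both directions are handled by the same direct computations, with the key step being the verification that the chosen $\alpha$ is monotone via the profunctor axiom. The only minor difference is that the paper's proof also includes an explicit verification that $F_1$ is fully faithful (asserted just before the proposition), whereas you take this as given and instead argue the image/replete-image equality directly from isomorphism-invariance of the stated property.
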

\begin{proof}
	For $(A,\alpha)$ and $(B,\beta)$ in $\Ord//X$, a morphism $f\colon A\to B$ in $\Ord$, and $a\in A$, we have
\[\alpha(a)\leqslant\beta f(a)\Leftrightarrow\forall_x(x\leqslant\alpha(a)\Rightarrow x\leqslant\beta f(a)),\]
	which means that $f$ is a morphism from $(A,\alpha)$ to $(B,\beta)$ in $\Ord//X$ if and only if it is a morphism from $F_1(A,\alpha)$ to $F_1(B,\beta)$ in $\Ord_X$. That is, $F_1$ is indeed fully faithful.
	
	Let $(A,\mathsf{a})$ be an object in $\Ord_X$ such that each $\mathsf{a}(-,a)$ has a largest element. Let $\alpha(a)$ be such a largest element (for each $a\in A$), and consider the so defined $(A,\alpha)$. We have
\[a\leqslant a'\Rightarrow\alpha(a)\in\a(-,a')\Rightarrow\alpha(a)\leqslant\alpha(a'),\]
	and so $(A,\alpha)$ is an object in $\Ord//X$. We also have $x\leqslant\alpha(a)\Leftrightarrow(x,a)\in\mathsf{a}$, and so $F_1(A,\alpha)=(A,\mathsf{a})$.
	
	Conversely, if $(A,\mathsf{a})=F_1(A,\alpha)=(A,\{(x,a)\mid x\leqslant\alpha(a)\})$, then, obviously, for each $a\in A$, $\alpha(a)$ is the largest $x\in X$ with $x\leqslant\alpha(a)$, which is equivalent to $x\in\mathsf{a}(-,a)$.
\end{proof}

\section{Effective descent morphisms in $\Ord_X$}

We will often use a pullback diagram
\[\xymatrix{E\times_BA\ar[d]_{\pi_1}\ar[r]^-{\pi_2}&A\ar[d]^f\\E\ar[r]_p&B}\] in a given category, and refer to it simply as the pullback of $p$ and $f$. Recall, e.g. from part 2 of Corollary 2.7 in \cite{[JT1994]}:

\begin{prop}\label{prop:obst}
	Let $\mathcal{D}$ be a category with pullbacks and $\mathcal{C}$ a full subcategory in $\mathcal{D}$ closed under pullbacks. For an effective descent morphism $p\colon E\to B$ in $\mathcal{D}$, the following conditions are equivalent:
	\begin{itemize}
		\item [(i)] $p$ is an effective descent morphism in $\mathcal{C}$;
		\item [(ii)] for every morphism $f\colon A\to B$ in $\mathcal{D}$, we have $E\times_BA\in\mathcal{C}\Rightarrow A\in\mathcal{C}$.\qed
	\end{itemize}
\end{prop}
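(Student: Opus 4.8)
The plan is to prove the equivalence through the monadic description of effective descent, exploiting that closure of $\mathcal{C}$ under pullbacks realizes the descent situation for $\mathcal{C}$ as a full ``subsituation'' of that for $\mathcal{D}$. Recall that descent along $p\colon E\to B$ is governed by the adjunction $p_!\dashv p^*$, where $p_!\colon\mathcal{D}/E\to\mathcal{D}/B$ is composition with $p$ and $p^*\colon\mathcal{D}/B\to\mathcal{D}/E$ is pullback along $p$; it induces a monad on $\mathcal{D}/E$ whose algebras form the category $\mathsf{Des}(p)$ of descent data, with comparison functor $K^{\mathcal{D}}_p\colon\mathcal{D}/B\to\mathsf{Des}(p)$, and $p$ is an effective descent morphism in $\mathcal{D}$ exactly when $K^{\mathcal{D}}_p$ is an equivalence. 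Since $E,B\in\mathcal{C}$ and $\mathcal{C}$ is closed under pullbacks, both $p_!$ and $p^*$ restrict to an adjunction between $\mathcal{C}/E$ and $\mathcal{C}/B$, the monad restricts, and I would record the crucial observation that a descent datum lies in $\mathsf{Des}_{\mathcal{C}}(p)$ if and only if its underlying object over $E$ lies in $\mathcal{C}/E$: all the further objects entering the cocycle condition ($E\times_B E$, the relevant fibre products of the underlying object, etc.) are pullbacks of objects of $\mathcal{C}$ and hence automatically in $\mathcal{C}$, while the structure morphism between them is a $\mathcal{C}$-morphism by fullness. Thus $\mathsf{Des}_{\mathcal{C}}(p)$ is the full subcategory of $\mathsf{Des}(p)$ detected on underlying objects, and $K^{\mathcal{C}}_p$ is the restriction of $K^{\mathcal{D}}_p$.

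First I would dispose of fully faithfulness for free. For $A\in\mathcal{C}/B$ the underlying object of $K^{\mathcal{D}}_p(A)=p^*(A)$ is $E\times_B A$, a pullback of objects of $\mathcal{C}$, hence in $\mathcal{C}$; so $K^{\mathcal{C}}_p$ indeed takes values in $\mathsf{Des}_{\mathcal{C}}(p)$. As the restriction of the fully faithful $K^{\mathcal{D}}_p$ to the full subcategories $\mathcal{C}/B\hookrightarrow\mathcal{D}/B$ and $\mathsf{Des}_{\mathcal{C}}(p)\hookrightarrow\mathsf{Des}(p)$, the functor $K^{\mathcal{C}}_p$ is automatically fully faithful. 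Consequently $p$ is always a descent morphism in $\mathcal{C}$, and the entire content of the equivalence is concentrated in the essential surjectivity of $K^{\mathcal{C}}_p$.

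It then remains to match essential surjectivity of $K^{\mathcal{C}}_p$ with condition (ii). For (ii)$\Rightarrow$(i), I would take $D\in\mathsf{Des}_{\mathcal{C}}(p)$ with underlying object $X\in\mathcal{C}/E$; since $K^{\mathcal{D}}_p$ is an equivalence, $D\cong K^{\mathcal{D}}_p(A)=p^*(A)$ for some $f\colon A\to B$ in $\mathcal{D}$, and the underlying object of $p^*(A)$ is $E\times_B A\cong X\in\mathcal{C}$, so (ii) gives $A\in\mathcal{C}$ and hence $D\cong K^{\mathcal{C}}_p(A)$ with $A\in\mathcal{C}/B$. Conversely, for (i)$\Rightarrow$(ii), let $f\colon A\to B$ in $\mathcal{D}$ satisfy $E\times_B A\in\mathcal{C}$; then $K^{\mathcal{D}}_p(A)=p^*(A)$ lies in $\mathsf{Des}_{\mathcal{C}}(p)$, so essential surjectivity of $K^{\mathcal{C}}_p$ yields $A'\in\mathcal{C}/B$ with $K^{\mathcal{C}}_p(A')\cong K^{\mathcal{D}}_p(A)$, and fully faithfulness of $K^{\mathcal{D}}_p$ then forces $A\cong A'$ in $\mathcal{D}/B$, whence $A\in\mathcal{C}$.

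The main obstacle, and the step I would treat most carefully, is precisely the identification made in the first paragraph: that $\mathsf{Des}_{\mathcal{C}}(p)$ is the full subcategory of $\mathsf{Des}(p)$ cut out by the single condition ``underlying object in $\mathcal{C}$''. This is exactly where closure of $\mathcal{C}$ under pullbacks is used, and without it the restricted monad and comparison functor need not behave as claimed, so the whole argument would collapse. A minor secondary point is repleteness: the arguments above conclude ``$A\in\mathcal{C}$'' from ``$A$ isomorphic to an object of $\mathcal{C}$'', so one reads $\mathcal{C}$ as closed under isomorphism — equivalently, replaces $\mathcal{C}$ by its replete image in the spirit of Proposition \ref{prop:32} — which is harmless and customary.
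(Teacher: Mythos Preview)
The paper does not supply its own proof of this proposition: it is stated with a \qed and simply recalled from part~2 of Corollary~2.7 in \cite{[JT1994]}. Your argument is correct and is essentially the standard one underlying that reference: closure of $\mathcal{C}$ under pullbacks makes the monad $p^*p_!$ restrict to $\mathcal{C}/E$, so that $\mathsf{Des}_{\mathcal{C}}(p)$ is the full subcategory of $\mathsf{Des}(p)$ on algebras with underlying object in $\mathcal{C}$; full faithfulness of $K^{\mathcal{C}}_p$ is then inherited from $K^{\mathcal{D}}_p$, and essential surjectivity is exactly condition~(ii). Your remarks about where pullback-closure is actually used and about repleteness are both to the point.
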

Using this proposition applied to the inclusion $F_2\colon \Ord_X\to\mathcal{C}_1\times_{\mathcal{C}_0}\mathcal{C}_2$ and the results of Section 2, we will prove
\begin{teo}\label{teo:OrdX}
A morphism $p\colon E\to B$ in $\Ord_X$ is an effective descent morphism if and only if:
\begin{itemize}
	\item [(a)] for each $b_2\leqslant b_1\leqslant b_0$ in $B$, there exist $e_2\leqslant e_1\leqslant e_0$ in $E$ with $p(e_i)=b_i$ for each $i=0,1,2$;
	\item [(b)] for each $x\in X$ and each $b_1\leqslant b_0$ in $B_x$, there exist $e_1\leqslant e_0$ in $E_x$ with $p(e_i)=b_i$ for each $i=0,1$.
\end{itemize}
\end{teo}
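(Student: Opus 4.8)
The plan is to apply Proposition \ref{prop:obst} to the inclusion $F_2\colon\Ord_X\to\mathcal{C}_1\times_{\mathcal{C}_0}\mathcal{C}_2$. First I would check that $\Ord_X$ is a full subcategory of $\mathcal{C}_1\times_{\mathcal{C}_0}\mathcal{C}_2$ closed under pullbacks: fullness is clear since both have the same morphisms over a given underlying map, and closure under pullbacks follows because the pullback in $\mathcal{C}_1\times_{\mathcal{C}_0}\mathcal{C}_2$ is computed on underlying sets, with the preorder and the family $(A_x)$ inherited componentwise, and one checks directly that if the two factors are $X$-filtered preorders (i.e. each $A_x$ is upclosed) then so is the pullback. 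Next I would identify what an effective descent morphism $p\colon E\to B$ in $\mathcal{C}_1\times_{\mathcal{C}_0}\mathcal{C}_2$ is: by the Theorem in Example \ref{exa:27}, it is exactly a morphism satisfying condition (a) of the present theorem together with surjectivity of $p$ and of all $p_x\colon E_x\to B_x$.

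With this in hand, by Proposition \ref{prop:obst} the morphism $p$ is an effective descent morphism in $\Ord_X$ if and only if (i) $p$ is an effective descent morphism in $\mathcal{C}_1\times_{\mathcal{C}_0}\mathcal{C}_2$ and (ii) for every $f\colon A\to B$ in $\mathcal{C}_1\times_{\mathcal{C}_0}\mathcal{C}_2$ with $E\times_B A$ an $X$-filtered preorder, $A$ is itself an $X$-filtered preorder. So the argument splits into two directions. For the ``if'' direction, assume (a) and (b). Condition (a) already gives the surjectivity of $p$ (take $b_2=b_1=b_0$), and condition (b) gives surjectivity of each $p_x$ (take $b_1=b_0$ in $B_x$), so (i) holds. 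For (ii), suppose $E\times_B A\in\Ord_X$ and take $a\leqslant a'$ in $A$ with $a\in A_x$; I must show $a'\in A_x$. Put $b=f(a),\ b'=f(a')$, so $b\leqslant b'$ and $b\in B_x$, hence $b'\in B_x$ since $B$ is filtered. By (b) applied to $b\leqslant b'$ in $B_x$ there are $e\leqslant e'$ in $E_x$ with $p(e)=b$, $p(e')=b'$; then $(e,a)\leqslant(e',a')$ in $E\times_B A$ and $(e,a)\in (E\times_B A)_x$ (since $e\in E_x$ and $a\in A_x$), so by the filtration hypothesis on $E\times_B A$ we get $(e',a')\in(E\times_B A)_x$, i.e. $a'\in A_x$. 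Thus $A\in\Ord_X$ and $p$ is effective descent in $\Ord_X$.

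For the ``only if'' direction, assume $p$ is effective descent in $\Ord_X$. Then by Proposition \ref{prop:obst}(i) it is effective descent in $\mathcal{C}_1\times_{\mathcal{C}_0}\mathcal{C}_2$, which by Example \ref{exa:27} gives condition (a) directly (and surjectivity of $p$ and all $p_x$). To obtain (b) I would use Proposition \ref{prop:obst}(ii) via a carefully chosen test object $A$: given $x\in X$ and $b_1\leqslant b_0$ in $B_x$, build an $A\to B$ in $\mathcal{C}_1\times_{\mathcal{C}_0}\mathcal{C}_2$ whose underlying set maps onto $\{b_0,b_1\}$ with the chain $a_1\leqslant a_0$, equip $A$ with the filtration in which $a_1\in A_x$ but $a_0\notin A_x$ (and adjust other $A_y$ minimally, e.g. only putting elements in $A_y$ when forced), so that $A\notin\Ord_X$ precisely because the required upclosure at $x$ fails. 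Then condition (ii) forces $E\times_B A\notin\Ord_X$, and unwinding what that means — some $(e_1,a_1)\leqslant(e_0,a_0)$ with $(e_1,a_1)\in(E\times_BA)_x$ but $(e_0,a_0)\notin(E\times_BA)_x$, which given the choice of $A$ means $e_1\in E_x$, $e_0\notin E_x$, $p(e_i)=b_i$ — would need to be shown \emph{not} to happen if (b) were to fail; more cleanly, I would argue the contrapositive: if (b) fails for some $x$ and $b_1\leqslant b_0$, construct such an $A$ explicitly and check that $E\times_B A$ \emph{is} an $X$-filtered preorder (here one uses surjectivity of $p$ to populate the fibres, together with the failure of (b) to guarantee no bad chain appears), contradicting (ii).

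The main obstacle will be the ``only if'' direction, specifically the explicit construction of the test object $A$ and the verification that $E\times_B A$ lies in $\Ord_X$ exactly when (b) fails. One must choose the underlying preorder of $A$, the map $f\colon A\to B$, and the filtration $(A_x)_{x\in X}$ so that (1) $A$ genuinely fails the $X$-filtered condition at the single place dictated by the failure of (b), (2) $A$ still lies in $\mathcal{C}_1\times_{\mathcal{C}_0}\mathcal{C}_2$ (the family must still be order-reversing in $x$), and (3) the pullback $E\times_B A$, whose elements are pairs $(e,a)$ with $p(e)=f(a)$, has each $(E\times_B A)_y$ upclosed — the delicate case being $y=x$, where the failure of (b) says precisely that no chain $e_1\leqslant e_0$ in $E$ has $e_1\in E_x$, $e_0\notin E_x$ sitting above $b_1\leqslant b_0$, which is exactly what kills the potential bad chain in the pullback. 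Getting the bookkeeping of the auxiliary values $A_y$ for $y\neq x$ right (so as not to accidentally introduce another violation or another bad chain in the pullback) is where care is needed, but it is routine once the right $A$ is written down.
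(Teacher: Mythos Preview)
Your ``if'' direction is correct and essentially identical to the paper's.

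The ``only if'' direction, however, has a genuine gap. You write that since $p$ is effective descent in $\Ord_X$, ``by Proposition~\ref{prop:obst}(i) it is effective descent in $\mathcal{C}_1\times_{\mathcal{C}_0}\mathcal{C}_2$''. This misreads the proposition: Proposition~\ref{prop:obst} has as a \emph{standing hypothesis} that $p$ is an effective descent morphism in the ambient category $\mathcal{D}$, and under that hypothesis it asserts the equivalence (i)$\Leftrightarrow$(ii). It does not allow you to deduce effective descent in $\mathcal{D}$ from effective descent in the full subcategory $\mathcal{C}$; that implication is false in general. Consequently you cannot obtain condition~(a) this way, and your later appeal to ``condition~(ii)'' for the test-object argument is not yet justified either, since the equivalence (i)$\Leftrightarrow$(ii) has not been made available.

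The paper avoids this trap by establishing~(a) with a different tool, Corollary~\ref{cor:eff}: the forgetful functor $S_1\colon\Ord_X\to\Ord$ has a section $J_1$ sending $A$ to $(A,(A_x)_{x\in X})$ with every $A_x=A$, and both preserve the chosen pullbacks and coequalizers; hence $S_1$ carries effective descent morphisms to effective descent morphisms, and the known characterisation in $\Ord$ yields~(a). For~(b) the paper then builds the same kind of two-element test object you sketch, but argues \emph{directly} rather than by contrapositive: it shows that $A\notin\Ord_X$ forces $E\times_BA\notin\Ord_X$ via Proposition~\ref{prop:obst}, then locates the unique $x'\sim x$ at which upclosedness of $(E\times_BA)_{x'}$ fails and reads off the required $e_1\leqslant e_0$ in $E_x$ from that failure. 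Your contrapositive version (assume (b) fails and show $E\times_BA\in\Ord_X$) is workable and arguably cleaner, but note that your ``unwinding'' sentence is slightly off: $(e_0,a_0)\notin(E\times_BA)_x$ follows already from $a_0\notin A_x$, so it does not by itself give $e_0\notin E_x$.
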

\begin{proof}
	``Only if'': Suppose $p$ is an effective descent morphism in $\Ord_X$. In Corollary \ref{cor:eff}, take $S_1$ to be the forgetful functor $\Ord_X\to\Ord$ and $J_1\colon \Ord\to\Ord_X$ to be defined by \[J_1(A)=(A,(A_x)_{x\in X})\,\,\,\text{with}\,\,\,A_x=A \text{ for all }x\in X.\] According to the description of effective descent morphisms in $\Ord$, it follows that condition (a) is satisfied.
	
	To prove that condition (b) is also satisfied, take any $x\in X$ and $b_1\leqslant b_0$ in $B_x$, and consider the pullback of $p$ and $f$ in $\mathcal{C}_1\times_{\mathcal{C}_0}\mathcal{C}_2$ with $A=\{b_1,b_0\}$ having $b_i\leqslant b_j\Leftrightarrow j\leqslant i$, $f$ being the inclusion map, and
	\begin{equation*}
	A_{x'}=\left\{\begin{array}{ll}\{b_1,b_0\},&\text{ if}\ x'< x;\\\{b_1\},& \text{ if}\ x'\sim x;\\
\emptyset,& \text{ if}\ x'\nleqslant x,\end{array}\right.
	\end{equation*}
where by $x'\sim x$ we mean $x'\leqslant x$ and $x\leqslant x'$, and by $x'< x$ we mean $x'\leqslant x$ and $x'\not\sim x$.
	In this case \[E\times_BA=(p^{-1}\{b_1\}\times\{b_1\})\cup(p^{-1}\{b_0\}\times\{b_0\})\]
	with
	\begin{equation*}
	(E\times_BA)_{x'}=\left\{\begin{array}{ll}((E_{x'}\cap p^{-1}\{b_1\})\times\{b_1\})\cup((E_{x'}\cap p^{-1}\{b_0\})\times\{b_0\}),& \text{ if}\ x'< x;\\
(E_{x'}\cap p^{-1}\{b_1\})\times\{b_1\},&    \text{ if}\ x'\sim x;\\\emptyset,& \text{ if}\ x'\nleq x.\end{array}\right.
	\end{equation*}
Since $A_x$ is not upclosed, $A$ does not belong to $\Ord_X$. Hence, since $p$ is an effective descent morphism in $\Ord_X$, it follows, by Proposition \ref{prop:obst}, that $E\times_BA$ does not belong to $\Ord_X$.
Therefore, at least one $(E\times_BA)_{x'}$ is not upclosed in $E\times_BA$.
However, suppose $x'< x$. Then \[(E\times_BA)_{x'}=(E_{x'}\cap p^{-1}\{b_1\})\times\{b_1\})\cup((E_{x'}\cap p^{-1}\{b_0\})\times\{b_0\})\] and, since $E_{x'}$ is upclosed in $E$, it is upclosed in $E\times_BA$.
Or, suppose $x'\nleqslant x$. Then $(E\times_BA)_{x'}$ is empty and so it is upclosed in $E\times_BA$.
Hence upclosedness of $(E\times_BA)_{x'}$ must fail for some $x'\sim x$; that is, $(E\times_BA)_{x'}$, and therefore $(E\times_BA)_x$, is not upclosed in $E\times_BA$.
Since $E_x$ is upclosed in $E$, \[(E\times_BA)_{x}=(E_{x}\cap p^{-1}\{b_1\})\times\{b_1\}\] is upclosed in $p^{-1}\{b_1\}\times\{b_1\}$, and so
there exist $(e_1,b_1)\in(E_{x}\cap p^{-1}\{b_1\})\times\{b_1\}$ and $(e_0,b_0)\in p^{-1}\{b_0\}\times\{b_1\}$ with $(e_1,b_1)\leqslant(e_0,b_0)$. This gives
		$e_1\leqslant e_0$, both in $E_{x}$, since $e_1$ belongs to $E_{x}$ and $E_{x}$ is upclosed in $E$, with $p(e_1)=b_1$ and $p(e_0)=b_0.$

	``If'': Suppose conditions (a) and (b) hold. This makes $p$ an effective descent morphism in $\mathcal{C}_1\times_{\mathcal{C}_0}\mathcal{C}_2$ (see Example \ref{exa:27}). Therefore, according to Proposition \ref{prop:obst}, it suffices to prove that, for every morphism $f\colon A\to B$ in $\mathcal{C}_1\times_{\mathcal{C}_0}\mathcal{C}_2$ with each $(E\times_BA)_x$ upclosed, each $A_x$ is also upclosed.
	
	Suppose $a\leqslant a'$ in $A$ with $a\in A_x$. As follows from condition (b), there exist $e,e'\in E$ with $e\leqslant e'$, $p(e)=a$, and $p(e')=a'$. This gives $(e,a)\leqslant(e',a')$ in $E\times_BA$ with $(e,a)\in(E\times_BA)_x$. Since $(E\times_BA)_x$ is upclosed in $E\times_BA$, it follows that $(e',a')$ is in $(E\times_BA)_x$, and so $a'$ is in $A_x$, since the projection $\pi_2\colon E\times_BA\to A$ is a morphism in $\mathcal{C}_1\times_{\mathcal{C}_0}\mathcal{C}_2$.
\end{proof}

\section{Effective descent morphisms in $\Ord//X$}

In this section we assume that $X$ is \textit{locally complete}, in the sense that, for each $x\in X$, the preorder $\{x'\in X\mid x'\leqslant x\}$ is equivalent to a complete lattice. We will also identify the category $\Ord//X$ with its $F_1$-image in $\Ord_X$; note that $\Ord//X$ is then closed under pullbacks in $\Ord_X$, thanks to the local completeness. While working with the morphism $p\colon E\to B$ we will write $B=(B,\beta)$, $E=(E,\varepsilon)$, and $E\times_BA=(E\times_BA,\gamma)$ in the notation we used for $\Ord//X$.

\begin{lem}\label{lem:trans}
	Let $p\colon E\to B$ be a morphism in $\Ord//X$ such that each induced map \linebreak $p_x\colon E_x\to B_x$ $(x\in X)$ is surjective. If $f\colon A\to B$ is a morphism in $\Ord_X$ with $E\times_BA$ in $\Ord//X$, then $A$ is in $\Ord//X$.
\end{lem}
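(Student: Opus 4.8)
The plan is to use Proposition~\ref{prop:32} to recognize objects of $\Ord//X$ inside $\Ord_X$: an $X$-filtered preorder $C=(C,(C_x)_{x\in X})$ lies in $\Ord//X$ precisely when, for every $c\in C$, the set $\{x\in X\mid c\in C_x\}$ has a largest element. So, assuming $E\times_BA\in\Ord//X$, I must produce, for each $a\in A$, a largest $x\in X$ with $a\in A_x$. The natural candidate is obtained by transporting the witness that already exists upstairs: pick any $e\in E$ with $p(e)=a$ (using surjectivity of $p$, which follows from surjectivity of, say, $p_x$ for the bottom element, or simply because condition (b) with the top filtration forces $p$ itself onto — in any case $p$ is onto), form the pair $(e,a)\in E\times_BA$, and let $\gamma(e,a)$ be the largest $x\in X$ with $(e,a)\in(E\times_BA)_x$, which exists since $E\times_BA\in\Ord//X$. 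I would then set $\alpha(a):=\gamma(e,a)$ and check (i) that this is independent of the choice of $e$, and (ii) that $\alpha(a)$ is in fact the largest $x$ with $a\in A_x$.

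For step~(ii): by the definition of the filtration on a pullback in $\mathcal{C}_1\times_{\mathcal{C}_0}\mathcal{C}_2$ (filtration chosen as for sets, i.e.\ $(E\times_BA)_x=(E_x\times_{B_x}A_x)$ viewed inside $E\times_BA$), we have $(e,a)\in(E\times_BA)_x \iff e\in E_x$ and $a\in A_x$. Hence $a\in A_{\gamma(e,a)}$, so $\alpha(a)$ does witness membership. Conversely suppose $a\in A_x$ for some $x$. Here is where surjectivity of the induced maps $p_x$ enters: since $a\in A_x$ and $p(e)=a$, we need some preimage of $a$ lying in $E_x$ — but the chosen $e$ need not be in $E_x$. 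This is the main obstacle, and it is why I expect the proof to involve changing the representative $e$. Using surjectivity of $p_x\colon E_x\to B_x$ applied to $\beta$-compatible data, or rather using condition-(b)-type liftings together with the fact that $E\in\Ord//X$ so that the $E_x$ are nested and upclosed, I would find $e''\in E_x$ with $p(e'')=a$; then $(e'',a)\in(E\times_BA)_x$, so $\gamma(e'',a)\geqslant x$, and by the independence claim (i) $\alpha(a)=\gamma(e'',a)\geqslant x$. That gives maximality.

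For the independence claim~(i): given $e,e'\in E$ with $p(e)=p(e')=a$, I want $\gamma(e,a)=\gamma(e',a)$. Note $(e,a)$ and $(e',a)$ need not be comparable in $E\times_BA$, so I cannot argue directly by monotonicity of $\gamma$. Instead I would exploit that $E=(E,\varepsilon)\in\Ord//X$ and that each $p_x$ is surjective: from $e\in E_{\gamma(e,a)}$ (equivalently $\varepsilon(e)\geqslant\gamma(e,a)$) I'd want to transfer to a statement about $e'$. The cleanest route is probably to observe that it suffices to prove the single inequality $\alpha(a):=\gamma(e,a)$ works as \emph{a} largest element for \emph{some} choice, and then maximality (step ii, which only used surjectivity of $p_x$, not the choice of $e$) already pins down $\gamma(e,a)$ for \emph{every} valid $e$: indeed once we know a largest $x$ with $a\in A_x$ exists, it is unique, so $\gamma(e,a)$ — which we showed satisfies $a\in A_{\gamma(e,a)}$ and is $\leqslant$ that largest $x$, while also every $x$ with $a\in A_x$ satisfies $x\leqslant\gamma(e',a)$ for the good representative $e'$ — forces all these values to coincide. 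Thus I would structure it as: (1) surjectivity of $p_x$'s $\Rightarrow$ for every $e$ with $p(e)=a$ and every $x$ with $a\in A_x$, there is $e''\in E_x$ with $p(e'')=a$ and $e''$ comparable-or-not — actually here use that, by condition (b)-analogue, pick $e''\leqslant?$; (2) conclude $\gamma(e,a)$ is the largest such $x$. Then $\alpha$ so defined is monotone by the argument in the proof of Proposition~\ref{prop:32}, so $A\in\Ord//X$. The real work, and the only subtle point, is the transfer-of-representative argument; everything else is bookkeeping with the explicit pullback filtration and Proposition~\ref{prop:32}.
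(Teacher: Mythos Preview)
Your proposal has a genuine gap: the independence claim (i) is false, and this is not a mere technicality but the heart of the matter. Take $X=\{0<1<2\}$, $B=\{b\}$ with $\beta(b)=2$, $E=\{e_0,e_2\}$ (discrete) with $\varepsilon(e_0)=0$, $\varepsilon(e_2)=2$, both mapping to $b$; then every $p_x$ is surjective. Let $A=\{a\}$ with $A_0=A_1=\{a\}$, $A_2=\emptyset$, and $f(a)=b$. One checks that $E\times_BA\in\Ord//X$ with $\gamma(e_0,a)=0$ and $\gamma(e_2,a)=1$: these are different. So $\gamma(e,a)$ depends on the choice of $e$, and picking ``any $e$'' gives the wrong value in general. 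Your circular attempt to recover (i) from (ii) does not close the gap, because step (ii) produces, for each $x$ with $a\in A_x$, a \emph{different} representative $e''$ with $\gamma(e'',a)\geqslant x$; nothing ties these together without already knowing a largest such $x$ exists.

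The deeper symptom is that you never invoke the local completeness of $X$, which is a standing assumption of Section~5 and is essential to the lemma. The paper's proof uses it directly: one first \emph{defines} the candidate $\alpha(a)=\bigvee\{x\in X\mid a\in A_x\}$ (the join is taken in $\{x'\leqslant\beta f(a)\}$, well defined by local completeness), and only then chooses $e\in E_{\alpha(a)}$ with $p(e)=f(a)$ using surjectivity of $p_{\alpha(a)}$. With this specific $e$ one has $e\in E_x$ for every $x$ in the joining set, so $(e,a)\in(E\times_BA)_x$ for all such $x$, whence $\alpha(a)\leqslant\gamma(e,a)$ and finally $a\in A_{\gamma(e,a)}\subseteq A_{\alpha(a)}$. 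The point is that the join lets you pick the representative $e$ \emph{once}, high enough to dominate all witnesses simultaneously; your scheme of changing representatives on the fly cannot substitute for this. (Minor note: throughout, the condition on pairs in the pullback is $p(e)=f(a)$, not $p(e)=a$.)
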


\begin{proof}
	According to Proposition \ref{prop:32}, we have to prove that, given $a\in A$,  there is a largest $x\in X$ with $a\in A_x$. Following the proof of Theorem 3.9 in \cite{[CL2023]}, we are going to put \[\alpha(a)=\bigvee\{x\in X\mid a\in A_x\}\] and prove that it is such an element. First we observe that,
if $a$ belongs to $A_x$, then $f(a)$ belongs $B_x$ and so $x\leqslant\beta f(a)$. Therefore
		\[\{x\in X\mid a\in A_x\}=\{x\leqslant\beta f(a)\mid a\in A_x\},\]
		and so the join above is well defined.
By definition of $\alpha(a)$, it suffices to prove that $a\in A_{\alpha(a)}$. As $f(a)\in B_{\alpha(a)}$,
we have $\alpha(a)\leqslant\beta f(a)$, and so $f(a)$ belongs to $B_{\alpha(a)}$.
By our hypotheses, there exist $e\in E_{\alpha(a)}$ with $p(e)=f(a)$. Hence, for every $x\leqslant\alpha(a)$,
$(e,a)$ belongs to $E_x\times_BA_x=(E\times_BA)_x$, and so $x\leqslant\gamma(e,a)$. Since $\alpha(a)$ is the join of such elements $x$, it follows that $\alpha(a)\leqslant\gamma(e,a)$.
Then $a=\pi_2(c,a)\in A_{\gamma(c,a)}\subseteq A_{\alpha(a)}$, which completes our proof.
\end{proof}

From Proposition \ref{prop:obst}, Theorem \ref{teo:OrdX}, and Lemma \ref{lem:trans}, we obtain:

\begin{teo}\label{teo:OrdXI}
	If $X$ is a locally complete preordered set, then a morphism $p\colon E\to B$ in $\Ord//X$ is effective for descent in $\Ord//X$ provided that:
\begin{itemize}
	\item [(a)] for each $b_2\leqslant b_1\leqslant b_0$ in $B$, there exist $e_2\leqslant e_1\leqslant e_0$ in $E$ with $p(e_i)=b_i$ for each $i=0,1,2$;
	\item [(b$'$)] for each $x\in X$ and each $b_1\leqslant b_0$ with $x\leqslant \beta(b_1)$, there exist $e_1\leqslant e_0$ with $x\leqslant \varepsilon(e_1)$ and $p(e_i)=b_i$ for $i=0,1$.\qed
\end{itemize}
\end{teo}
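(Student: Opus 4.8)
The plan is to obtain the statement as an assembly of Proposition \ref{prop:obst}, Theorem \ref{teo:OrdX}, and Lemma \ref{lem:trans}, in that order; the only genuine work is translating the hypotheses (a) and (b$'$), which are phrased in the $\Ord//X$-language, into statements about the filtrations $(B_x)_{x\in X}$ and $(E_x)_{x\in X}$.

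First I would check that (a) and (b$'$) force $p$ to be an effective descent morphism in $\Ord_X$, using Theorem \ref{teo:OrdX}. Condition (a) here is verbatim condition (a) of Theorem \ref{teo:OrdX}. For condition (b) of Theorem \ref{teo:OrdX}, recall that under the identification of $\Ord//X$ with its $F_1$-image one has $B_x=\{b\in B\mid x\leqslant\beta(b)\}$ and $E_x=\{e\in E\mid x\leqslant\varepsilon(e)\}$. Hence ``$b_1\leqslant b_0$ in $B_x$'' is exactly ``$b_1\leqslant b_0$ with $x\leqslant\beta(b_1)$'' — the inequality $b_1\leqslant b_0$ together with monotonicity of $\beta$ already puts $b_0$ in $B_x$ — and likewise ``$e_1\leqslant e_0$ in $E_x$'' is ``$e_1\leqslant e_0$ with $x\leqslant\varepsilon(e_1)$'', since $E_x$ is upclosed and so $e_0\in E_x$ comes for free from $e_1\in E_x$ and $e_1\leqslant e_0$. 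Thus (b$'$) is precisely condition (b) of Theorem \ref{teo:OrdX}, and $p$ is an effective descent morphism in $\Ord_X$.

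Next, since $X$ is locally complete, $\Ord//X$ (viewed through $F_1$) is a full subcategory of $\Ord_X$ closed under pullbacks, as already noted; so Proposition \ref{prop:obst} applies with $\mathcal{D}=\Ord_X$ and $\mathcal{C}=\Ord//X$. To conclude that $p$ is effective for descent in $\Ord//X$ it then suffices to verify condition (ii) of that proposition: for every morphism $f\colon A\to B$ in $\Ord_X$, $E\times_BA\in\Ord//X$ implies $A\in\Ord//X$. This is exactly the conclusion of Lemma \ref{lem:trans}, so the only remaining point is to check the hypothesis of that lemma, namely surjectivity of each $p_x\colon E_x\to B_x$. But applying (b$'$) with $b_1=b_0=b\in B_x$ produces some $e\in E$ with $x\leqslant\varepsilon(e)$, i.e.\ $e\in E_x$, and $p(e)=b$; hence $p_x$ is surjective. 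Feeding this into Lemma \ref{lem:trans} and then into Proposition \ref{prop:obst} finishes the argument.

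I do not expect a real obstacle: the statement is only a sufficiency claim, and Lemma \ref{lem:trans} delivers precisely what condition (ii) of Proposition \ref{prop:obst} asks for. The one place deserving a moment's care is the bookkeeping in the second paragraph — making sure that the ``$x\leqslant\beta(b_1)$'' and ``$x\leqslant\varepsilon(e_1)$'' formulations in (b$'$) really coincide with membership in $B_x$ and $E_x$, and that upclosedness of $B_x$ and $E_x$ renders the conditions on $b_0$ and $e_0$ automatic, so that (b$'$) matches condition (b) of Theorem \ref{teo:OrdX} exactly rather than merely implying it.
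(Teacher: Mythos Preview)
Your proposal is correct and follows exactly the paper's own route: the paper simply states that the theorem follows from Proposition \ref{prop:obst}, Theorem \ref{teo:OrdX}, and Lemma \ref{lem:trans}, and you have supplied the straightforward bookkeeping (translating (b$'$) into condition (b) of Theorem \ref{teo:OrdX} via $B_x=\{b\mid x\leqslant\beta(b)\}$ and $E_x=\{e\mid x\leqslant\varepsilon(e)\}$, and extracting surjectivity of each $p_x$ from (b$'$) with $b_1=b_0$) that the paper leaves implicit.
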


Our next result shows that conditions (a) and (b$'$) characterize effective descent morphisms in $\Ord//X$ among those $p\colon E\to B$ with $p_x\colon E_x\to B_x$ surjective for every $x\in X$.

\begin{teo}\label{teo:53}
Let $X$ be a locally complete preordered set with bottom element. A morphism $p\colon E\to B$ in $\Ord//X$ such that $p_x\colon E_x\to B_x$ is surjective for every $x\in X$ is effective for descent in $\Ord//X$ if and only if:
\begin{itemize}
	\item [(a)] for each $b_2\leqslant b_1\leqslant b_0$ in $B$, there exist $e_2\leqslant e_1\leqslant e_0$ in $E$ with $p(e_i)=b_i$ for each $i=0,1,2$;
	\item [(b$'$)] for each $x\in X$ and each $b_1\leqslant b_0$ with $x\leqslant \beta(b_1)$, there exist $e_1\leqslant e_0$ with $x\leqslant \varepsilon(e_1)$ and $p(e_i)=b_i$ for $i=0,1$.\qed
\end{itemize}
\end{teo}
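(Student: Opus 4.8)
The plan is to treat the two implications separately, the substance being entirely in the ``only if'' direction. For ``if'' there is nothing new to prove: conditions (a) and (b$'$) are exactly the hypotheses of Theorem~\ref{teo:OrdXI}, so $p$ is effective for descent in $\Ord//X$. (Note that (b$'$), already with $b_1=b_0$, forces every $p_x$ and hence $p$ itself to be surjective, so in this direction the extra hypotheses of the statement are automatically satisfied.)

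For ``only if'', assume $p$ is effective for descent in $\Ord//X$ with every $p_x$ surjective. I would first extract~(a): apply Corollary~\ref{cor:eff} to the forgetful functor $S_1\colon\Ord//X\to\Ord$ equipped with the section $J_1\colon\Ord\to\Ord//X$ sending a preorder $A$ to $(A,\kappa)$, where $\kappa\colon A\to X$ is constant at the bottom element of $X$. This is the single point at which the assumption on $X$ is used (for $\Ord_X$ the obvious section $A\mapsto(A,(A_x)_{x})$ with all $A_x=A$, used in the proof of Theorem~\ref{teo:OrdX}, requires nothing). Since pullbacks and coequalizers in the relevant categories are computed as for sets and the constant-bottom structure is visibly stable under them, $S_1$ and $J_1$ preserve the chosen pullbacks and coequalizers, so $p$ becomes an effective descent morphism of $\Ord$ and (a) is its classical description (Proposition 3.4 of \cite{[JS2002]}, recalled in Example~\ref{exa:27}).

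For (b$'$) the plan is to promote ``effective for descent in $\Ord//X$'' to ``effective for descent in $\Ord_X$'', after which Theorem~\ref{teo:OrdX} concludes, since for a morphism between objects of $\Ord//X$ its condition~(b) is merely a restatement of (b$'$): in the $F_1$-image, belonging to $B_x$ (resp.\ $E_x$) means $x\leqslant\beta(-)$ (resp.\ $x\leqslant\varepsilon(-)$), and upclosedness makes the two phrasings coincide. For the promotion I would argue: from (a) together with the surjectivity of $p$ and of all $p_x$, the theorem of Example~\ref{exa:27} already makes $p$ effective for descent in $\mathcal{C}_1\times_{\mathcal{C}_0}\mathcal{C}_2$; then Proposition~\ref{prop:obst} applied to the pair $(\mathcal{C}_1\times_{\mathcal{C}_0}\mathcal{C}_2,\ \Ord//X)$ turns the hypothesis into the implication ``$E\times_B A\in\Ord//X\Rightarrow A\in\Ord//X$'' for all $f\colon A\to B$ in $\mathcal{C}_1\times_{\mathcal{C}_0}\mathcal{C}_2$; and threading this, together with Lemma~\ref{lem:trans} (whose sole hypothesis is exactly the surjectivity of the $p_x$), through the inclusions $\Ord//X\subseteq\Ord_X\subseteq\mathcal{C}_1\times_{\mathcal{C}_0}\mathcal{C}_2$ should give the analogue ``$E\times_B A\in\Ord_X\Rightarrow A\in\Ord_X$'', so that Proposition~\ref{prop:obst} yields effective descent in $\Ord_X$.

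I expect this last transfer to be the main obstacle. Proposition~\ref{prop:obst} compares a category only with one fixed ambient category, so moving obstruction information across the two nested inclusions at once is not formal; and the evident comparison $\Ord_X\to\Ord//X$ (``principalize each profunctor $\mathsf{a}(-,a)$'') fails to commute with pullback along $p$, so descent data cannot be transported along it naively. The surjectivity of the $p_x$, via Lemma~\ref{lem:trans}, is precisely what has to bridge this gap, and arranging that interaction correctly — rather than the essentially routine checks about sections and chosen (co)limits — is the delicate part; a viable alternative would be to bypass the transfer with an explicit obstructing object in $\mathcal{C}_1\times_{\mathcal{C}_0}\mathcal{C}_2$, namely a filtered preorder violating the largest-element criterion of Proposition~\ref{prop:32} whose pullback along $p$ nonetheless lies in $\Ord//X$, built from an assumed failure of (b$'$).
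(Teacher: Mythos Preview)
Your ``if'' direction and your derivation of~(a) coincide with the paper's: sufficiency is Theorem~\ref{teo:OrdXI}, and~(a) comes from Corollary~\ref{cor:eff} applied to the forgetful functor together with the constant-at-$\bot$ section $J_1$ (this is indeed the one place where the bottom element of $X$ is used).

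For~(b$'$) the paper does not attempt your primary route. The promotion of effective descent from $\Ord//X$ to $\Ord_X$ that you sketch really does run into the obstacle you name: Proposition~\ref{prop:obst} and Lemma~\ref{lem:trans} together yield ``$E\times_BA\in\Ord//X\Rightarrow A\in\Ord//X$'' for $A$ in the ambient category, but there is no evident way to upgrade this to ``$E\times_BA\in\Ord_X\Rightarrow A\in\Ord_X$'', and the paper makes no such attempt. Instead the paper goes straight to an explicit object --- in spirit your alternative --- but with a twist relative to how you describe it: rather than constructing a \emph{filtered preorder} $A\in\Ord_X\setminus\Ord//X$ whose pullback lands in $\Ord//X$, it simply reuses verbatim the two-point object $A\in(\mathcal{C}_1\times_{\mathcal{C}_0}\mathcal{C}_2)\setminus\Ord_X$ from the ``only if'' half of Theorem~\ref{teo:OrdX} (so $A_{x'}=\{b_1,b_0\},\{b_1\},\emptyset$ according as $x'<x$, $x'\sim x$, $x'\nleq x$) and re-runs that proof's upclosedness analysis of $(E\times_BA)_{x'}$ to extract $e_1\leqslant e_0$ directly. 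The paper's treatment of~(b$'$) is literally one sentence pointing back at Theorem~\ref{teo:OrdX}, so you should drop the promotion-to-$\Ord_X$ plan and work with that same object and case analysis.
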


\begin{proof}
We only need to prove the necessity of conditions (a) and (b$'$).
Its proof follows directly the ``only if" proof of Theorem \ref{teo:OrdX}. Let $\bot$ be the bottom element of $X$. As in the proof of Theorem \ref{teo:OrdX} we apply Corollary \ref{cor:eff} to the functors
\[\xymatrix{\Ord//X\ar@<1ex>[rr]^-{S_1}&&\Ord\ar@<1ex>[ll]^-{J_1}}\]
where $S_1$ is the forgetful functor and $J_1$ assigns to each preordered set $A$ the pair $(A,\bot)$, with $\bot(a)=\bot$ for every $a\in A$, and conclude that an effective descent morphism in $\Ord//X$ is in particular effective for descent in $\Ord$, that is, it satisfies condition (a).

To show the necessity of (b$'$), let $x\in X$ and $b_1\leqslant b_0$ in $B$ with $x\leqslant \beta(b_1)$. Given a pullback diagram of $p$ and $f$ in $\mathcal{C}_0\times_{\mathcal{C}_1}\mathcal{C}_2$ with $A$ and $A_x$ as described in the proof of \ref{teo:OrdX}, both $A$ and $E\times_BA$ do not belong to $\Ord_X$ and therefore they do not belong to $\Ord//X$.
\end{proof}

There is another convenient way to express the surjectivity of each $p_x\colon E_x\to B_x$:

\begin{prop}
	The following conditions on a morphism $p:E\to B$ in $\Ord//X$ are equivalent:
	\begin{itemize}
		\item [(i)] $p_x\colon E_x\to B_x$ is surjective for every $x\in X$;
		\item [(ii)] for every $b\in B$ there exists $e\in E$ with $p(e)=b$ and $\varepsilon(e)\sim\beta(b)$.
	\end{itemize}
\end{prop}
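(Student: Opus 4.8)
The plan is to prove the two implications separately, both by straightforward unwinding of the definitions together with the local-completeness assumption on $X$ that is in force in this section.

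For the implication (i)$\Rightarrow$(ii), I would fix $b\in B$ and set $x=\beta(b)$, so that $b\in B_x$ (recall $B_x=\{b'\mid x\leqslant\beta(b')\}$ under the identification of $\Ord//X$ with its $F_1$-image). By (i), surjectivity of $p_x\colon E_x\to B_x$ gives some $e\in E_x$ with $p(e)=b$. Membership $e\in E_x$ means $x\leqslant\varepsilon(e)$, i.e. $\beta(b)\leqslant\varepsilon(e)$. For the reverse inequality, $p$ being a morphism in $\Ord//X$ gives $\varepsilon\leqslant\beta p$, hence $\varepsilon(e)\leqslant\beta(p(e))=\beta(b)$. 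Combining, $\varepsilon(e)\sim\beta(b)$, which is exactly (ii).

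For (ii)$\Rightarrow$(i), I would fix $x\in X$ and $b\in B_x$, so $x\leqslant\beta(b)$. Apply (ii) to get $e\in E$ with $p(e)=b$ and $\varepsilon(e)\sim\beta(b)$; then $x\leqslant\beta(b)\leqslant\varepsilon(e)$, so $e\in E_x$, and $p(e)=b$, witnessing surjectivity of $p_x$. This direction uses only the $F_1$-description of the filtrations and the defining inequality of morphisms, and does not even need local completeness.

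I do not expect any real obstacle here; the statement is essentially a dictionary entry. The only point requiring a moment's care is remembering that, because we are inside the $F_1$-image, the filtration on an object $(B,\beta)$ of $\Ord//X$ is $B_x=\{b\in B\mid x\leqslant\beta(b)\}$, so that ``$b\in B_x$'' and ``$x\leqslant\beta(b)$'' are interchangeable, and dually for $E$; once this is kept in mind both implications are immediate. If one wants to phrase (i)$\Rightarrow$(ii) without choosing $x=\beta(b)$ one could instead invoke Proposition \ref{prop:32} directly, but taking $x=\beta(b)$ is the cleanest route.
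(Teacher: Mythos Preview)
Your argument is correct and matches the paper's proof essentially verbatim: for (i)$\Rightarrow$(ii) you take $x=\beta(b)$, lift via surjectivity of $p_x$, and combine $e\in E_{\beta(b)}$ with $\varepsilon\leqslant\beta p$; the paper dismisses (ii)$\Rightarrow$(i) as obvious, which is just your second paragraph compressed. One small remark: neither direction actually uses the local-completeness hypothesis you mention at the outset, so you can drop that from your plan.
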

\begin{proof}
	(ii)$\Rightarrow$(i) is obvious and so we only need to prove (i)$\Rightarrow$(ii). Suppose (i) holds. Given $b\in B$, since $b\in B_{\beta(b)}$, there exists $e\in E_{\beta(b)}$ with $p(e)=b$. Then we have $\beta(b)\leqslant\varepsilon(e)$ since $e$ belongs to $E_{\beta(b)}$, and $\varepsilon(e)\leqslant\beta p(e)=\beta(b)$ since $p$ is a morphism in $\Ord//X$.
\end{proof}

\begin{lem}\label{lem:55}
	Suppose that, for every $x\in X$, every subset of $\{x'\in X\mid x'\leqslant x\}$ has a largest element. If $p\colon E\to B$ is a pullback stable extremal epimorphism in $\Ord//X$, then $p_x\colon E_x\to B_x$ is surjective for every $x\in X$.
\end{lem}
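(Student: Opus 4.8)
\emph{Plan.} I would argue directly (not by contradiction), probing $p$ with the one-point filtered preorder sitting at level $x$ and invoking the elementary fact that the monomorphism part of any factorization of an extremal epimorphism is an isomorphism. Two preliminaries: the underlying-set functor $\Ord//X\to\mathsf{Set}$ is faithful, so any morphism of $\Ord//X$ that is injective on underlying sets is a monomorphism; and every extremal epimorphism $p\colon E\to B$ in $\Ord//X$ is surjective on underlying sets, since $p$ factors as $E\to\mathrm{Im}(p)\hookrightarrow B$, where $\mathrm{Im}(p)$ is the set-theoretic image carrying the order and the $\beta$-function restricted from $B$, and the inclusion is injective on underlying sets, hence a monomorphism, hence an isomorphism.

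Now fix $x\in X$ and $b\in B$ with $x\leqslant\beta(b)$; I want $e\in p^{-1}(b)$ with $x\leqslant\varepsilon(e)$. Let $A=(\{a\},\alpha)$ be the one-point preorder with $\alpha(a)=x$; then $f\colon A\to B$, $f(a)=b$, is a morphism in $\Ord//X$ because $x\leqslant\beta(b)$. Form the pullback of $p$ along $f$ in $\Ord//X$ (it exists by local completeness of $X$), with projections $\pi_1\colon E\times_BA\to E$ and $\pi_2\colon E\times_BA\to A$; write $\gamma$ for its $\Ord//X$-structure and identify its underlying set with $p^{-1}(b)$ via $(e,a)\mapsto e$. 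Being a pullback of the pullback-stable extremal epimorphism $p$, the projection $\pi_2$ is an extremal epimorphism, hence surjective, so $p^{-1}(b)\neq\emptyset$. Since $\pi_1$ and $\pi_2$ are morphisms of $\Ord//X$ we have $\gamma(e,a)\leqslant\varepsilon(e)$ and $\gamma(e,a)\leqslant\alpha(a)=x$ for every $e\in p^{-1}(b)$, so $\{\gamma(e,a)\mid e\in p^{-1}(b)\}$ is a nonempty subset of $\{x'\in X\mid x'\leqslant x\}$ and, by hypothesis, has a largest element $\gamma(e_0,a)$. Set $A'=(\{a\},\gamma(e_0,a))$; the identity map of $\{a\}$ is then a morphism $j\colon A'\to A$ in $\Ord//X$ (as $\gamma(e_0,a)\leqslant x$), injective on underlying sets, hence a monomorphism, and $\pi_2$ factors through $j$ via $(e,a)\mapsto a\colon E\times_BA\to A'$, which is a morphism because $\gamma(e,a)\leqslant\gamma(e_0,a)$ for all $e$. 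As $\pi_2$ is an extremal epimorphism and $j$ a monomorphism, $j$ is an isomorphism, whence $x=\alpha(a)\leqslant\gamma(e_0,a)\leqslant\varepsilon(e_0)$; together with $p(e_0)=b$ this exhibits the required element of $E_x$ over $b$.

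The step I expect to need the most care is checking that the pullback $E\times_BA$ really lands in $\Ord//X$ and that $\pi_1,\pi_2$ deliver exactly the inequalities $\gamma(e,a)\leqslant\varepsilon(e)$ and $\gamma(e,a)\leqslant x$ — this is precisely where local completeness of $X$ is used. I would also stress that the hypothesis on $X$ enters at full strength: it is the existence of a \emph{largest} element, rather than merely a supremum, of $\{\gamma(e,a)\mid e\in p^{-1}(b)\}$ that both makes $E\times_BA\to A'$ a morphism and forces $j$ to be an isomorphism. Everything else is a short diagram chase with one-point objects.
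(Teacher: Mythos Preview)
Your argument is correct and follows essentially the same route as the paper's: pull $p$ back along a one-point object, use the hypothesis on $X$ to select a largest value among those attained on the fibre, factor the resulting extremal epimorphism through the induced monomorphism between one-point objects, and conclude that this monomorphism is an isomorphism. The only cosmetic difference is that the paper places its one-point object at level $\beta(b)$ rather than at a general $x\leqslant\beta(b)$; this makes the pullback structure literally $\varepsilon|_{p^{-1}(b)}$ (no meets to compute) and yields in one stroke an $e$ with $\varepsilon(e)\sim\beta(b)$, hence surjectivity of every $p_x$ simultaneously.
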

\begin{proof}
	Given $b\in B$, consider the pullback diagram \[\xymatrix{p^{-1}(b)\ar[d]\ar[r]&\{b\}\ar[d]\\E\ar[r]_p&B}\] in $\Ord//X$, where the vertical arrow are the inclusion maps, and $\{b\}=(\{b\},\beta')$ with $\beta'(b)=\beta(b)$; accordingly $p^{-1}(b)=(p^{-1}(b),\varepsilon')$ with $\varepsilon'(e)=\varepsilon(e)$ for each $e\in p^{-1}(b)$. We have $\varepsilon(e)\leqslant\beta(b)$ for each $e\in p^{-1}(b)$, and we define $\beta'':\{b\}\to X$ by taking $\beta''(b)$ to be a largest element in the set $\{\varepsilon(e)\mid e\in p^{-1}(b)\}$. Then $(p^{-1}(b),\varepsilon')\to(\{b\},\beta')$, which is the top arrow in our pullback diagram, factors through $(\{b\},\beta'')\to(\{b\},\beta')$. Since $p$ is a pullback stable extremal epimorphism, it follows that $(\{b\},\beta'')\to(\{b\},\beta')$ is an isomorphism. Therefore $\beta''(b)\sim\beta'(b)$, which implies the existence of $e\in p^{-1}(b)$ with $\varepsilon(e)\sim\beta(b)$ and so completes the proof.
\end{proof}

From Theorem \ref{teo:53} and Lemma \ref{lem:55}, we immediately obtain:

\begin{teo}\label{teo:to}
Suppose that, for every $x\in X$, every subset of $\{x'\in X\mid x'\leqslant x\}$ has a largest element. Then a morphism $p\colon E\to B$ is effective for descent in $\Ord//X$ if and only if:
\begin{itemize}
	\item [(a)] for each $b_2\leqslant b_1\leqslant b_0$ in $B$, there exist $e_2\leqslant e_1\leqslant e_0$ in $E$ with $p(e_i)=b_i$ for each $i=0,1,2$;
	\item [(b$'$)] for each $x\in X$ and each $b_1\leqslant b_0$ with $x\leqslant \beta(b_1)$, there exist $e_1\leqslant e_0$ with $x\leqslant \varepsilon(e_1)$ and $p(e_i)=b_i$ for $i=0,1$.\qed
\end{itemize}
\end{teo}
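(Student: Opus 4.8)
The plan is to read off Theorem~\ref{teo:to} from Theorem~\ref{teo:53} together with Lemma~\ref{lem:55}. The hypothesis imposed here on $X$ is exactly the one needed in Lemma~\ref{lem:55}, and it is in particular stronger than the local completeness assumed throughout this section, so both of those results are available. The content of Theorem~\ref{teo:to} is that, under this stronger hypothesis, the auxiliary requirement that $p_x\colon E_x\to B_x$ be surjective for every $x$ --- which occurs in Theorem~\ref{teo:53} --- becomes automatic for effective descent morphisms, so it can be dropped.

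For the ``if'' direction there is nothing to add: conditions (a) and (b$'$) already imply that $p$ is effective for descent in $\Ord//X$ by Theorem~\ref{teo:OrdXI}, local completeness of $X$ being part of our standing assumptions. (Alternatively, one can route this through Theorem~\ref{teo:53}: applying (b$'$) with $b_1=b_0=b$ and any $x\leqslant\beta(b)$ produces $e$ with $p(e)=b$ and $x\leqslant\varepsilon(e)$, i.e. each $p_x$ is surjective, so the ``if'' half of Theorem~\ref{teo:53} applies directly.)

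For the ``only if'' direction, suppose $p\colon E\to B$ is effective for descent in $\Ord//X$. The first step is the standard observation that every effective descent morphism in a category with pullbacks is a pullback-stable extremal epimorphism: it is pullback-stable by general descent theory (see \cite{[JT1994]}), and it is an extremal epimorphism because the change-of-base functor $p^{\ast}$ is conservative --- given a factorization $p=me$ with $m$ a monomorphism, the pullback $p^{\ast}(m)$ of $m$ along $p$ is split epic, with section $\langle e,1_E\rangle$, and monic as a pullback of $m$, hence an isomorphism, whence $m$ is an isomorphism. (Recall that $\Ord//X$ has pullbacks, thanks to local completeness, as noted at the start of this section.) Since $X$ satisfies the hypothesis of Lemma~\ref{lem:55}, that lemma applies and gives that $p_x\colon E_x\to B_x$ is surjective for every $x\in X$. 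With this surjectivity in hand, the ``only if'' half of Theorem~\ref{teo:53} yields exactly conditions (a) and (b$'$).

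I do not expect any real obstacle here: everything substantial is already established, so the present theorem genuinely is an immediate consequence of Theorem~\ref{teo:53} and Lemma~\ref{lem:55}. The one point deserving a word is the interface with Theorem~\ref{teo:53}, whose statement also mentions a bottom element of $X$; but in that proof the bottom element is used only to construct a section of the forgetful functor $\Ord//X\to\Ord$ (via the constant structure map), and any fixed element of $X$ serves this role equally well, so no extra hypothesis is actually needed.
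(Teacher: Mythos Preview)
Your proof is correct and follows exactly the paper's approach, which simply records that Theorem~\ref{teo:to} is immediate from Theorem~\ref{teo:53} and Lemma~\ref{lem:55}. You have moreover filled in two details the paper leaves implicit: why an effective descent morphism is a pullback-stable extremal epimorphism (so that Lemma~\ref{lem:55} applies), and why the bottom-element hypothesis appearing in Theorem~\ref{teo:53} is inessential here.
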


\begin{remark}
	Although Theorem \ref{teo:53} is stronger than Theorem 3.9 in \cite{[CL2023]}, we still do not know how far it is from a complete characterization of effective descent morphisms in $\mathsf{Ord}//X$. Theorem \ref{teo:to} answers this question, but only under a strong additional condition on $X$.
\end{remark}

\end{document}